\documentclass[10pt,a4paper]{article}
\usepackage{amsmath,amssymb,amsfonts,amsthm}
\usepackage{graphicx}
\usepackage{float,graphicx,color}
\usepackage[all]{xy}



\DeclareMathOperator{\Div}{div}

\DeclareMathOperator{\grad}{grad}
\DeclareMathOperator{\curl}{curl}

\DeclareMathOperator{\hess}{hess}
\DeclareMathOperator{\myspan}{span}

\newcommand{\rec}{\mathrm{rec}}

\newcommand{\conj}{\dagger}

\renewcommand{\div}{\Div}


\newcommand{\rmd}{\mathrm{d}}

\newcommand{\rmH}{\mathrm{H}}
\newcommand{\rmL}{\mathrm{L}}

\newcommand{\bbC}{\mathbb{C}}

\newcommand{\bbI}{\mathbb{I}}

\newcommand{\bbR}{\mathbb{R}}

\newcommand{\calA}{\mathcal{A}}

\newcommand{\calT}{\mathcal{T}}

\newcommand{\beq}{\begin{equation}}
\newcommand{\eeq}{\end{equation}}

\newcommand{\WF}{\mathfrak{W}}

\newcommand{\myand}{ \quad \textrm{ and } \quad }

\newcommand{\myif}{\textrm{ if }}
\newcommand{\with}{\textrm{ with }}
\newcommand{\mywith}{\with}

\newcommand{\cleq}{\preccurlyeq}


\newcommand{\subcell}{\lhd} 
\newcommand{\subcelln}{\lhd^\cdot }
\newcommand{\subcellp}{\lhd_\cdot }














\newtheorem{theorem}{Theorem}[section]
\newtheorem{lemma}[theorem]{Lemma}

\newtheorem{proposition}[theorem]{Proposition}

\theoremstyle{definition}
\newtheorem{definition}{Definition}[section]

\theoremstyle{remark}
\newtheorem{remark}{Remark}[section]





\theoremstyle{plain}

\title{Second order gauge invariant discretizations \\
to the Schr\"odinger and Pauli equations
}

\author{Snorre H. Christiansen\footnote{Department of Mathematics, University of Oslo, PO Box 1053 Blindern, NO 0316 Oslo, Norway}, Tore G. Halvorsen\footnote{Department of Mathematics, University of Oslo, PO Box 1053 Blindern, NO 0316 Oslo, Norway}}
\date{}

\begin{document}

\maketitle

\begin{abstract}
We introduce a numerical method, based on finite elements and lattice gauge theory, to compute approximate solutions to Schr\"odinger and Pauli equations. The crucial geometric property of the method is discrete gauge invariance. The main new achievement is second order convergence. This is proved by interpreting the method as defined on gauge potential dependent finite element spaces and providing an analysis of such spaces in terms of gauge potential dependent norms on simplices of all dimensions. 
\end{abstract}

\section{Introduction}
The Schr\"odinger equation is the fundamental equation of non-relativistic quantum mechanics. It couples to an electromagnetic field through an associated gauge potential (see for instance chapter 4 in \cite{Sha08}). Since there is some freedom in choosing a gauge potential for a given electromagnetic field, it is important that the solution to the Schr\"odinger equation transforms, when the gauge potential changes, in such a way that observable quantities, such as energy levels and probability densities, remain unchanged.

This paper is concerned with developing a finite element method with a similar property, and it is inspired by lattice gauge theory in the sense of \cite{Wil74}. The new method improves upon our previous works \cite{ChrHal11SINUM} and \cite{ChrHal12JMP} by yielding a higher order of convergence. In order to be more precise, on the problem and our new results, we need to introduce some notations.

We let $S$ denote some spatial domain in $\bbR^3$, which we assume to be bounded, convex and polyhedral.

A gauge potential on $S$ is just a vector field. Given one, called $A$, we consider the covariant gradient, defined on complex valued functions on $S$ by:
\begin{equation}
\nabla_A u = \nabla u + i A u. 
\end{equation}

We will be interested in taking scalar products of complex functions, spinors, as well as complexified vectors and one-forms.
The $\bbC$-bilinear scalar products will be denoted:
\begin{equation}
(u,v) \mapsto u \cdot v.
\end{equation}
Hermitian scalar products then take the form:
\begin{equation}
(u,v) \mapsto u^\conj \cdot v,
\end{equation}
where $u^\conj$ denotes the complex conjugate of $u$.

The adjoint of the operator $\nabla_A$ with respect to the $\rmL^2$ hermitian scalar products on $S$ is denoted $\nabla_A^\star$. We may remark that:
\begin{equation}
\nabla_A^\star u = - (\nabla \cdot u + i A \cdot u).
\end{equation}
We then define the covariant Laplacian:
\begin{equation}
\Delta_A  = - \nabla_A^\star \nabla_A.
\end{equation}
We may expand this expression as follows:
\begin{equation}
\Delta_A u = \Delta u + 2i A \cdot \nabla u +i (\nabla \cdot A) u - |A|^2 u.
\end{equation}

We may also define the covariant differential operators:
\begin{equation}
\begin{array}{rlcll}
\grad_A u & = &\nabla_A u & = & \grad u + iA u,\\
\curl_A u &= & \nabla_A \times u &  = & \curl u + i A \times u,\\
\div_A u & = & \nabla_A \cdot u & = & \div u + i A \cdot u,
\end{array}
\end{equation}
in terms of which we have:
\begin{equation}
\Delta_A u = \div_A \grad_A u.
\end{equation}

We are interested in eigenvalue computations of the following form. Find a complex function $u$ on $S$ and $E\in \bbR$ such that:
\begin{equation}\label{eq:eigprob}
- \Delta_A u = E u.
\end{equation}
We use Dirichlet boundary conditions, that is $u|_{\partial S} = 0$. In the last section of the paper we consider some extensions of this eigenvalue problem: inclusion of a scalar potential and taking into account spin. The latter is done through the Pauli equation. However for the remainder of this introduction we stick to (\ref{eq:eigprob}).

The variational formulation is to find $u \in \rmH^1_0(S)$ and $E$ in $\bbR$ such that for all $v \in \rmH^1_0(S)$:
\begin{equation} a(u,v) = E \langle u , v \rangle,
\end{equation}
with:
\begin{equation}
\langle u, v \rangle = \int u^\conj v,
\end{equation}
and:
\begin{equation}
a(u,v) = \int (\nabla_A u)^\conj \cdot (\nabla_A v). 
\end{equation}
Depending on the situation we will include or not the dependence of $a$ on $A$, by writing $a[A]$.

A crucial property of this eigenvalue problem is gauge invariance. Given a scalar field $\alpha$ on $S$, with real values, we may transform $A$ and $u$ as follows:
\begin{align}
A \mapsto A' & = A - \grad \alpha, \label{eq:gaugetransfa}\\
u \mapsto u' & = \exp(i \alpha) u  \label{eq:gaugetransfu}.
\end{align}
Then we have:
\begin{equation}
\nabla_{A'} u' = \exp(i\alpha ) \nabla_A u.
\end{equation}
We therefore have the invariance properties:
\begin{align}
\langle u', v' \rangle & = \langle u, v \rangle, \label{eq:invmass}\\
 a[A'](u',v') & =  a[A](u,v). \label{eq:invstiff} 
\end{align}
These invariance properties are related to local conservation of electric charge, via Noether's theorems.

For the eigenvalue problem, they have the consequence that if:
\begin{equation}
- \Delta_A u = E u,
\end{equation}
then:
\begin{equation}
- \Delta_{A'} u' = E u',
\end{equation}
Moreover, concerning the associated probability densities, we notice that they are the same for $u$ and $u'$. That is, for all $x \in S$ we have:
\begin{equation}
|u'(x)|^2 = |u(x)|^2.
\end{equation}

We wish to construct a numerical method with similar invariance properties.

Consider regular simplicial meshes $\calT_h$ of mesh-width $h$. On $\calT_h$ we have the space $X_h$ of complex-valued continuous piecewise affine functions. The standard Galerkin method is to find $u \in X_h$ and $E$ in $\bbR$ such that for all $v \in X_h$:
\begin{equation} \label{eq:standgal}
a[A](u,v) = E \langle u , v \rangle.
\end{equation}
The standard results on eigenvalue approximation (see \cite{BabOsb89}), show that eigenvectors converge with order $h$ in $\rmH^1$ norm and with order $h^2$ in $\rmL^2$ norm, whereas the eigenvalues converge with order $h^2$.

This method is not gaugeinvariant. We suppose that the field $A$ used in (\ref{eq:standgal}) is a Whitney one form (defined in section \ref{sec:wf}), or equivalently a N\'ed\'elec edge element vector field. It is then natural to consider gauge transformations for $A$ of the form (\ref{eq:gaugetransfa}) with $\alpha$ a Whitney zero form, that is a scalar continuous piecewise affine function. However then the gauge trasnformation (\ref{eq:gaugetransfu}) maps $u$ out of  the space $X_h$.

A solution is to keep gauge transformations (\ref{eq:gaugetransfa}) acting on $A$, but modify the gauge transformations (\ref{eq:gaugetransfa}) acting on $u$, so as to stay within $X_h$:
\begin{align}
A \mapsto A' & = A - \grad \alpha,\label{eq:atransfdisc}\\
u \mapsto u' & = \Pi_h \exp(i \alpha) u \label{eq:utransfdisc},
\end{align}
where $\Pi_h$ is the nodal interpolator onto $X_h$. An interpretation is that one modifies just the nodal values by the gauge trasnformation, but stay piecewise affine. Then the problem is that we no longer have the invariance of the bilinear forms (\ref{eq:invmass}, \ref{eq:invstiff}).

In \cite{ChrHal11SINUM} we proposed a modification of the bilinear forms, inspired by lattice gauge theory \cite{Wil74}\cite{GovUng98}, such that the modified bilinear forms were invariant under the discrete gaugetransformations (\ref{eq:atransfdisc}, \ref{eq:utransfdisc}), yet stayed close to the original ones. In the case of smooth gauge potential we obtained the estimate, for $u,v \in X_h$:
\begin{equation}\label{eq:hconv}
|a(u,v) - \tilde a(u,v)| \cleq h \| u\|_{\rmH^1(S)} \|v\|_{\rmH^1(S)}. 
\end{equation}
This guaranteed that the eigenvectors converge at a rate $h$ in $\rmH^1$. However it seems that the order of convergence in $\rmL^2$ was just $h$, and that the order of convergence for the eigenvalue is also just $h$. Moreover our error estimates were just valid for meshes for which the discrete maximum principle is true. This condition is typically enforced by requiring that dihedral angles be weakly acute.

In \cite{ChrHal12JMP} we proposed, among other things, a more elaborate method for such eigenvalue problems, which did not require such restrictions on the mesh. This was achieved by no longer relying on mass-lumping techniques. However the basic estimate is still (\ref{eq:hconv}), so that the orders of convergence were still just $h$.

\emph{The purpose of this paper is to present and analyse a method which is gauge invariant, but where second order convergence holds. We also include spin in the discussion. Importantly, the theoretical underpinnings of the method, which might be of a broader interest, are of a rather new type.}

The underlying idea is different from our previous works. Even if our method can be interpreted as a variational crime on $X_h$, the analysis relies heavily on interpreting it as a variational crime on another discretization space. This implicit space, denoted $X_h[\tilde A]$, is a discrete space still having one degree of freedom per vertex, but where the local shape of the functions is determined by solving a local PDE related to the global PDE we are addressing. Explicitely we consider functions $u$ such that for all simplexes $T$ in the mesh, of all dimensions, $\Delta_{\tilde A} (u|_{T}) = 0$, where $\tilde A$ is the average of $A$ on $T$. For instance we may remark that $X_h[0] = X_h$ (affine functions on simplices are characterized by the property of being harmonic on all subsimplices). When $\tilde A \neq 0$ we cannot compute explicitely the solutions to these local PDEs, except on edges. As it turns out this will be enough for defining our numerical method.

The idea that the discretization space should incorporate the behavior of the PDE is not new. Multiscale finite element methods are often based on this idea. The method of \cite{XuZik99} can also be interpreted this way, and we share with this method that we are especially concerned with the behavior of discrete functions on edges, whereas the previously mentioned multiscale methods are mostly concerned with the behavior on the maximal simplices (tetrahedra in three space dimensions) often using standard finite elements on the skeleton. We have previously introduced modified shape functions adapted to convection diffusion problems, in the framework of finite element systems \cite{Chr13FoCM}\cite{ChrHalSor14}. It has been our hope that the type of analysis we present here might extend to convection diffusion equations, but so far this har not been realized. For convection diffusion equations one is interested in the regime of vanishing viscosity, whereas in the present case we are \emph{not} considering the highfrequency regime, even though this could be interesting in some experimental setups.

As in the framework of finite element systems, we insist on the recursive nature of our discrete functions: they are defined not only on tetrahedra (top-dimensional simplexes in our mesh) but also on all the subsimplicies of all dimensions. While we have previously been much concerned with the algebra of this recursive structure \cite{Chr08M3AS}\cite{Chr09AWM} (for mixed finite elements or, more generally, differential forms) we have also analysed stable interpolation operators using recursively defined norms \cite{ChrMunOwr11} (e.g. Proposition 5.51). 

In this paper, in section \ref{sec:mainb}, we supply new estimates for recursively defined norms depending on gaugepotentials. This is perhaps our main theoretical novelty. The numerical method is defined and analysed in section \ref{sec:num}. Finally in \ref{sec:pauli} we include scalar potentials and spin, as in the Pauli equation.

\section{Comparisons of recursive norms \label{sec:mainb}}


In this section we study functions defined on simplexes. Given a simplex $T$, we consider functions in $\rmH^1(T)$ with the additional property that the restriction to any face $T'$ of $T$ is in $\rmH^1(T)$. We denote by $\rmH^1_\rec(T)$ the space of such functions, to point out the recursive nature of the construction. One advantage with this space with respect to finite element analysis is that the nodal interpolator (which requires taking vertex values) is well defined independently of space dimension. This reduces the need for appeals to regularity theorems and Sobolev injections.

Our goal is to relate different norms of such functions, in particular norms depending on a choice of gauge potential $A$, defined on $T$ as well as on its faces. For simplicity we consider only the case of gauge potentials which are constant, so that the differential operators $\Delta_A$ have constant coefficients. This has the advantage of guaranteeing for instance that the kernel of $\nabla_A$ is a one-dimensional space of functions.

In what follows $\calA$ will denote a set consisting of constant one-forms (gauge potentials) attached to $T$ and its faces, which is bounded say with respect to the $\rmL^\infty$ norm. In this section, most often one should think of $T$ as a ``reference'' simplex, of diameter of order $1$. If $T$ is a ``physical'' simplex of diameter $h$ and we map it back to a reference simplex $\hat T$ and pull back a gauge potential $A$ on $T$ to $\hat T$, $A$ gets multiplied by $h$. Thus, if $A$ is bounded on $S$, the pullbacks of $A$ to reference elements will indeed live in a bounded set. Notice that the set of constant one-forms on each subsimplex of a given simplex is finite dimensional, therefore $\calA$ will be compact, say in $\rmL^\infty$ norm.

Notationwise we use $T' \subcell T$ to denote that $T'$ is a subsimplex of $T$, $T' \subcellp T$ to say that $T'$ is a subsimplex of $T$ which is not a point, and $T'\subcelln T$ to say that $T'$ is a subsimplex of $T$ distinct from $T$. Given a simplicial complex $\calT$,  $\calT^k$ denotes the set of simplices in $\calT$ of dimension $k$. Similarly $T^0$ denotes the set of vertices of a simplex $T$.

\paragraph{Bounds on $\rmL^2$ norms.}

\begin{proposition}
Let $T$ be a Lipschitz domain.

We have for $u \in \rmH^1(T)$ and $A \in \calA$:
\begin{equation}\label{eq:poinc}
\| u\|_{\rmL^2(T)} \cleq \| \nabla_A u \|_{\rmL^2(T)} + \| u\|_{\rmL^2(\partial T)},
\end{equation}
\end{proposition}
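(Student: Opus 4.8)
The plan is to argue by contradiction via compactness, because the naive estimate fails precisely in the regime the proposition must cover. Writing $\nabla u = \nabla_A u - iAu$ and invoking the classical Poincaré inequality with boundary term, $\|u\|_{\rmL^2(T)} \cleq \|\nabla u\|_{\rmL^2(T)} + \|u\|_{\rmL^2(\partial T)}$, only yields
\[
\|u\|_{\rmL^2(T)} \cleq \|\nabla_A u\|_{\rmL^2(T)} + \|A\|_{\rmL^\infty}\|u\|_{\rmL^2(T)} + \|u\|_{\rmL^2(\partial T)},
\]
and the middle term cannot be absorbed into the left-hand side when $A$ is large, which is allowed since $\calA$ is merely bounded, not small. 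So I would abandon the direct route and instead suppose the inequality fails uniformly over $\calA$.

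Concretely, if the estimate failed with a uniform constant, there would exist sequences $u_n \in \rmH^1(T)$ and $A_n \in \calA$ with $\|u_n\|_{\rmL^2(T)} = 1$ while $\|\nabla_{A_n} u_n\|_{\rmL^2(T)} + \|u_n\|_{\rmL^2(\partial T)} \to 0$. Since $\|A_n u_n\|_{\rmL^2(T)} \le \|A_n\|_{\rmL^\infty}$ is bounded (using the normalization and boundedness of $\calA$) and $\nabla u_n = \nabla_{A_n} u_n - i A_n u_n$, the sequence $u_n$ is bounded in $\rmH^1(T)$. By the Rellich theorem I extract a subsequence with $u_n \to u$ strongly in $\rmL^2(T)$ and $u_n \rightharpoonup u$ weakly in $\rmH^1(T)$, so $\|u\|_{\rmL^2(T)} = 1$. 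Using compactness of $\calA$ in $\rmL^\infty$, I may also assume $A_n \to A$ for some $A \in \calA$, whence $A_n u_n \to A u$ in $\rmL^2(T)$. Because $T$ is Lipschitz, the trace map $\rmH^1(T) \to \rmL^2(\partial T)$ is compact, so $u_n \to u$ in $\rmL^2(\partial T)$ and therefore $u|_{\partial T} = 0$. Passing to the weak limit in $\nabla_{A_n} u_n = \nabla u_n + i A_n u_n \to 0$ gives $\nabla_A u = \nabla u + i A u = 0$ in $T$.

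The contradiction then comes from the structure of the kernel of $\nabla_A$ for constant $A$: since $\nabla_A u = 0$ reads $\nabla u = -iAu$ with $A$ constant, every solution is a scalar multiple of the nowhere-vanishing exponential $x \mapsto \exp(-iA\cdot x)$, confirming the one-dimensionality remarked in the text. As this kernel contains no nonzero function vanishing on $\partial T$, the boundary condition $u|_{\partial T} = 0$ forces $u = 0$, contradicting $\|u\|_{\rmL^2(T)} = 1$. The main obstacle, and the reason the argument is nontrivial, is exactly the non-absorbability of the $\|A\|_{\rmL^\infty}\|u\|_{\rmL^2(T)}$ term; the compactness argument circumvents it, and the use of the compactness of $\calA$ is what delivers a constant uniform over all $A \in \calA$, as required throughout this section.
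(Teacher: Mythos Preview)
Your proof is correct, but it takes a genuinely different route from the paper's. The paper first proves the case $A=0$ by the same kind of compactness/contradiction argument you use, but then handles general $A$ in one line via Kato's diamagnetic inequality $|\nabla |u|(x)| \leq |\nabla_A u(x)|$: applying the $A=0$ estimate to $|u|$ immediately gives the result, with the \emph{same} constant as for $A=0$ and with no hypothesis on $A$ at all beyond being real-valued. Your argument instead runs the compactness machinery directly on the pair $(u_n, A_n)$, which forces you to invoke compactness of $\calA$ and the explicit structure of $\ker \nabla_A$ for constant $A$. Both are valid; the paper's approach is shorter and strictly more general (no compactness or even boundedness of $\calA$ is needed, nor constancy of $A$), while yours has the merit of being self-contained and not relying on the diamagnetic inequality, and it illustrates the pattern used in the subsequent propositions of the section where no Kato-type shortcut is available.
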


\begin{proof}
We consider first the case $A= 0$. Suppose the inequality does not hold. Let $u_n$ be a sequence in $\rmH^1(T)$ such that:
\begin{align}
\| u_n\|_{\rmL^2(T)} & = 1\\
\| \nabla u_n \|_{\rmL^2(T)} + \| u_n \|_{\rmL^2(\partial T)} & \to 0.
\end{align}
We may extract a subsequence which converges in $\rmL^2(T)$. Since $\nabla u_n$ converges in $\rmL^2(T)$, this subsequence is Cauchy in $\rmH^1(T)$, hence converges. The limit is a constant function which is $0$ on the boundary. But it should also have $\rmL^2$ norm equal to $1$. Since this is impossible, the inequality must hold.

Consider now the possibility that $A \neq 0$. Recall Kato's pointwise a.e. inequality:
\begin{equation}
| \nabla |u|(x)| \leq | \nabla_A u(x)|.
\end{equation}
Then one applies (\ref{eq:poinc}) to $|u|$.
\end{proof}

By induction on dimension we deduce:
\begin{proposition}\label{prop:hzerorec} Let $T$ be a simplex.
 We have, for $u \in \rmH^1_\rec(T)$ and $A \in \calA$:
\begin{equation}
\sum_{T' \subcellp T}\| u\|_{\rmL^2(T)} \cleq \sum_{T' \subcellp T} \| \nabla_A u\|_{\rmL^2(T)} + \sum_{x \in T^0} |u(x)|,
\end{equation}

\end{proposition}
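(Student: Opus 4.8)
The plan is to induct on the dimension $n$ of the simplex $T$, using the preceding Proposition (the Poincar\'e-type bound (\ref{eq:poinc})) to pass from a simplex to its boundary and then feeding in the inductive hypothesis on each facet; throughout I read the summand attached to a subsimplex $T'$ as the $\rmL^2(T')$ norm. The standing assumption $u \in \rmH^1_\rec(T)$ is exactly what makes every term meaningful: the restrictions of $u$ to all subsimplices lie in $\rmH^1$, so in particular $u$ is continuous along edges and the vertex values $u(x)$, $x \in T^0$, are well defined. I also note that the constant in (\ref{eq:poinc}) is independent of $A \in \calA$ (it is obtained by reduction to the case $A = 0$ via Kato's inequality), so all implied constants below stay uniform over $\calA$.

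For the base case $n = 1$, the edge $T$ is its own only non-point subsimplex, so the left-hand side reduces to $\| u\|_{\rmL^2(T)}$. Applying (\ref{eq:poinc}) and noting that $\partial T = T^0$ consists of two points, so that $\| u \|_{\rmL^2(\partial T)} = (\sum_{x \in T^0} |u(x)|^2)^{1/2} \cleq \sum_{x \in T^0} |u(x)|$, gives the claim at dimension one.

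For the inductive step with $n \geq 2$, I would first apply (\ref{eq:poinc}) to $T$ itself and decompose the boundary term over the facets $F$ of $T$ (the subsimplices of dimension $n-1$), using $\| u\|_{\rmL^2(\partial T)} = (\sum_F \| u\|^2_{\rmL^2(F)})^{1/2} \cleq \sum_F \| u\|_{\rmL^2(F)}$. Each facet $F$ is a simplex of dimension $n-1 \geq 1$, so the inductive hypothesis applies and bounds $\| u\|_{\rmL^2(F)}$ — one of the summands of $\sum_{F' \subcellp F}\| u\|_{\rmL^2(F')}$ — by $\sum_{F' \subcellp F}\| \nabla_A u\|_{\rmL^2(F')} + \sum_{x \in F^0} |u(x)|$; this controls $\| u\|_{\rmL^2(T)}$. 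To recover the full left-hand side I would then split $\sum_{T' \subcellp T}\| u\|_{\rmL^2(T')}$ into the top term $\| u\|_{\rmL^2(T)}$ (just bounded) and the proper non-point subsimplices; since every proper subsimplex of $T$ sits inside at least one facet, $\sum_{T' \subcellp T,\, T' \neq T}\| u\|_{\rmL^2(T')} \leq \sum_F \sum_{F' \subcellp F}\| u\|_{\rmL^2(F')}$, and the inductive hypothesis again bounds each inner sum. Collecting terms produces $\sum_{T' \subcellp T}\| \nabla_A u\|_{\rmL^2(T')} + \sum_{x\in T^0}|u(x)|$ on the right, as desired.

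The only points demanding care are bookkeeping rather than analysis. A given proper subsimplex $T'$ may be a face of several facets $F$, so the passage $\sum_{T' \neq T} \leq \sum_F \sum_{F'}$ overcounts, and likewise each vertex lies in several facets; both only inflate the constant by a factor bounded in terms of the number $n+1$ of facets, which is harmless since $\cleq$ hides dimension-dependent constants. One should also fix, once and for all, the convention that $\nabla_A$ on a subsimplex $T'$ refers to the covariant gradient built from the gauge potential of $\calA$ attached to $T'$, so that the inductive hypothesis on the facets produces precisely the terms $\| \nabla_A u\|_{\rmL^2(T')}$ appearing on the right. I expect no genuine obstacle beyond organizing this recursion and confirming that the base-dimension boundary term reduces to vertex values.
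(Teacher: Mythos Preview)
Your argument is correct and is precisely the induction on dimension the paper has in mind: apply (\ref{eq:poinc}) on $T$, break $\partial T$ into facets, and invoke the inductive hypothesis on each facet to trade boundary $\rmL^2$ norms for covariant gradients plus vertex values. The bookkeeping you flag (overcounting of shared subsimplices and vertices, the per-face gauge potentials from $\calA$) is handled exactly as you describe and only affects the constant.
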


\paragraph{Bounds on $\rmH^1$ norms.}

\begin{proposition}
Let $T$ be a Lipschitz domain.

We have for $u \in \rmH^1(T)$ such that $u|_{\partial T} \in \rmH^1(\partial T)$   and $A \in \calA$:
\begin{equation}
\| u\|_{\rmH^1(T)}    \cleq \| \Delta_A u \|_{\rmH^{-1}(T)} + \| u\|_{\rmH^1(\partial T)}.
\end{equation}
\end{proposition}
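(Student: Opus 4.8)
The plan is to reduce this inhomogeneous boundary estimate to a homogeneous Dirichlet estimate for the covariant Laplacian, and to extract the latter from the gauge-robust Poincar\'e inequality (\ref{eq:poinc}) already established. First I would lift the boundary data: since $\partial T$ is Lipschitz, the trace operator $\rmH^1(T)\to\rmH^{1/2}(\partial T)$ admits a bounded right inverse, so there is $w\in\rmH^1(T)$ with $w|_{\partial T}=u|_{\partial T}$ and $\|w\|_{\rmH^1(T)}\cleq\|u\|_{\rmH^{1/2}(\partial T)}\cleq\|u\|_{\rmH^1(\partial T)}$; note that only the weaker $\rmH^{1/2}(\partial T)$ norm of the trace is actually needed, so the argument in fact proves a slightly stronger statement. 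Setting $v=u-w$ we have $v\in\rmH^1_0(T)$ and $\Delta_A v=\Delta_A u-\Delta_A w$. Because each $A\in\calA$ is bounded, $\Delta_A=\div_A\grad_A$ maps $\rmH^1(T)$ boundedly into $\rmH^{-1}(T)$ (test against $\phi\in\rmH^1_0(T)$ and use $\langle-\Delta_A w,\phi\rangle=(\grad_A w,\grad_A\phi)$ together with $\|\grad_A w\|_{\rmL^2}\cleq\|w\|_{\rmH^1}$), whence $\|\Delta_A v\|_{\rmH^{-1}(T)}\cleq\|\Delta_A u\|_{\rmH^{-1}(T)}+\|u\|_{\rmH^1(\partial T)}$.

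The core step is the homogeneous bound $\|v\|_{\rmH^1(T)}\cleq\|\Delta_A v\|_{\rmH^{-1}(T)}$ for $v\in\rmH^1_0(T)$. I would first establish that the covariant Dirichlet form is coercive on $\rmH^1_0(T)$: applying (\ref{eq:poinc}) to $v$, whose boundary term vanishes, gives $\|v\|_{\rmL^2(T)}\cleq\|\nabla_A v\|_{\rmL^2(T)}$, and then from $\nabla v=\nabla_A v-iAv$ one gets $\|\nabla v\|_{\rmL^2}\leq\|\nabla_A v\|_{\rmL^2}+\|Av\|_{\rmL^2}\cleq\|\nabla_A v\|_{\rmL^2}+\|v\|_{\rmL^2}\cleq\|\nabla_A v\|_{\rmL^2}$, so that $\|v\|_{\rmH^1(T)}\cleq\|\nabla_A v\|_{\rmL^2(T)}$. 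Testing $-\Delta_A v=\nabla_A^\star\nabla_A v$ against $v$ then yields
\[
\|\nabla_A v\|_{\rmL^2(T)}^2=\langle-\Delta_A v,v\rangle\leq\|\Delta_A v\|_{\rmH^{-1}(T)}\|v\|_{\rmH^1(T)}\cleq\|\Delta_A v\|_{\rmH^{-1}(T)}\|\nabla_A v\|_{\rmL^2(T)},
\]
so $\|\nabla_A v\|_{\rmL^2(T)}\cleq\|\Delta_A v\|_{\rmH^{-1}(T)}$ and therefore $\|v\|_{\rmH^1(T)}\cleq\|\Delta_A v\|_{\rmH^{-1}(T)}$. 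Combining the two steps gives $\|u\|_{\rmH^1(T)}\leq\|v\|_{\rmH^1(T)}+\|w\|_{\rmH^1(T)}\cleq\|\Delta_A u\|_{\rmH^{-1}(T)}+\|u\|_{\rmH^1(\partial T)}$, and all constants are uniform over $\calA$ since (\ref{eq:poinc}) is (its proof via Kato's inequality eliminates the $A$-dependence) and $\|A\|_{\rmL^\infty}$ is bounded on the compact set $\calA$.

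I expect the main obstacle to be the coercivity step, where one must ensure that the indefinite zeroth-order term $-|A|^2$ appearing in the expansion of $\Delta_A$ does not spoil the lower bound. The device that circumvents any sign difficulty is to bound $\|v\|_{\rmH^1}$ by $\|\nabla_A v\|_{\rmL^2}$ \emph{directly}, using the gauge-robust Poincar\'e inequality rather than integrating the expanded form by parts term by term; the covariant energy $\|\nabla_A v\|_{\rmL^2}^2$ is then manifestly nonnegative and controls the full $\rmH^1$ norm, so no cancellation among the first- and zeroth-order terms needs to be tracked.
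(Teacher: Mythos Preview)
Your proof is correct, but it takes a different route from the paper's. Both arguments begin by lifting the boundary data to reduce to the homogeneous case $v\in\rmH^1_0(T)$, and both eventually use the duality estimate $\|\nabla_A v\|_{\rmL^2}^2\leq\|\Delta_A v\|_{\rmH^{-1}}\|v\|_{\rmH^1}$. The difference lies in how the $\rmH^1$ norm of $v$ is then controlled: the paper argues by contradiction and compactness, extracting a weakly convergent subsequence (using compactness of $\calA$ to stabilize $A_n\to A$) and showing the limit must vanish, whereas you proceed directly by invoking the already-established gauge-robust Poincar\'e inequality (\ref{eq:poinc}) to obtain the coercivity bound $\|v\|_{\rmH^1}\cleq\|\nabla_A v\|_{\rmL^2}$. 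Your approach is more constructive, avoids the compactness extraction on $\calA$, and makes explicit that the result is really a consequence of (\ref{eq:poinc}); it also shows, as you note, that only the $\rmH^{1/2}(\partial T)$ norm of the trace is needed, which is a mild sharpening. The paper's compactness argument, on the other hand, is self-contained and does not need to refer back to the previous proposition.
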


\begin{proof}
Suppose not. Choose sequences $u_n$ and $A_n \in \calA$ such that:
\begin{align}
\| u_n\|_{\rmH^1(T)} & = 1,\\
\| \Delta_{A_n} u_n \|_{\rmH^{-1}(T)} + \| u_n\|_{\rmH^1(\partial T)} & \to 0.
\end{align}
We may suppose that $A_n \to A$, since $\calA$ is compact.

Let $v_n$ be an extension to $T$ of $u_n|_{\partial T}$, converging to $0$ in $\rmH^1(T)$. Define $w_n = u_n -v_n \in \rmH^1_0(T)$. We have:
\begin{align}
\| w_n\|_{\rmH^1(T)} & \to 1, \label{eq:honebound}\\
\| \Delta_{A_n} w_n \|_{\rmH^{-1}(T)} & \to 0.
\end{align}

Extract a subsequence of $w_n$ which converges weakly in $\rmH^1(T)$ to, say, $w \in \rmH^1_0(T)$. We have $\Delta_A w = 0$ so $w = 0$. So $w_n$ converges  to $0$, strongly in $\rmL^2(T)$.
We can also write:
\begin{align}
\| \nabla_{A_n} w_n\|^2_{\rmL^2(T)} \leq \| \Delta_{A_n} w_n\|_{\rmH^{-1}(T)} \| w_n\|_{\rmH^1(T)} \to 0.
\end{align}
From which we deduce:
\begin{equation}
\|\nabla w_n\|_{\rmL^2} \cleq \| \nabla_{A_n} w_n \|_{\rmL^2} + \|w_n\|_{\rmL^2} \to 0
\end{equation}
This contradicts (\ref{eq:honebound}).
\end{proof}

By induction we deduce:
\begin{proposition}\label{prop:honerec}
We have estimates, for $u \in \rmH^1_\rec(T)$ and $A \in \calA$:
\begin{equation}
\sum_{T' \subcellp T}\| u\|_{\rmH^1(T)} \cleq \sum_{T' \subcellp T} \| \Delta_A u\|_{\rmH^{-1}(T')} + \sum_{x \in T^0} |u(x)|.
\end{equation}

\end{proposition}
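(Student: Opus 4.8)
The plan is to argue by induction on $\dim T$, using the preceding (non-recursive) $\rmH^1$ estimate
\[
\| u\|_{\rmH^1(T)} \cleq \| \Delta_A u \|_{\rmH^{-1}(T)} + \| u\|_{\rmH^1(\partial T)}
\]
as the engine at each level. The base case $\dim T = 0$ is trivial: there are no $T' \subcellp T$, so the left-hand sum $\sum_{T' \subcellp T}\|u\|_{\rmH^1(T')}$ is empty and the inequality reduces to $0 \cleq \sum_{x \in T^0}|u(x)|$, which holds.

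For the inductive step, suppose the estimate holds for every simplex of dimension strictly less than $n$, and let $\dim T = n$. First I would split the left-hand sum into the top term $T' = T$ and the remaining terms, which are exactly the subsimplices $T'$ with $T' \subcellp T$ and $T' \subcelln T$; each such $T'$ is contained in at least one facet (codimension-one face) $F$ of $T$. For the top term I apply the displayed non-recursive estimate to $T$ with the gauge potential $A \in \calA$ attached to $T$, bounding $\|u\|_{\rmH^1(T)}$ by $\|\Delta_A u\|_{\rmH^{-1}(T)}$ together with $\|u\|_{\rmH^1(\partial T)}$. The boundary norm is comparable to the facet sum $\sum_F \|u\|_{\rmH^1(F)}$ (with the convention that on a $0$-dimensional face the $\rmH^1$ norm is the modulus of the vertex value); this uses the hypothesis $u|_{\partial T} \in \rmH^1(\partial T)$, whose compatibility across ridges is guaranteed by $u \in \rmH^1_\rec(T)$.

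Then, on each facet $F$ — a simplex of dimension $n-1$ carrying its own attached gauge potential from $\calA$ — I would invoke the induction hypothesis,
\[
\sum_{T' \subcellp F} \|u\|_{\rmH^1(T')} \cleq \sum_{T' \subcellp F}\|\Delta_A u\|_{\rmH^{-1}(T')} + \sum_{x \in F^0}|u(x)|.
\]
Since $\|u\|_{\rmH^1(F)} \leq \sum_{T'\subcellp F}\|u\|_{\rmH^1(T')}$, summing over the facets $F$ simultaneously controls $\|u\|_{\rmH^1(\partial T)}$ and all the non-top terms of the left-hand side. Collecting, the $\rmH^{-1}$ contributions assemble into $\sum_{T'\subcellp T}\|\Delta_A u\|_{\rmH^{-1}(T')}$ and the vertex contributions into $\sum_{x\in T^0}|u(x)|$; the passage from facet-indexed sums to $T$-indexed sums costs only a fixed combinatorial factor, since a proper subsimplex or vertex is counted once per facet containing it and an $n$-simplex has boundedly many facets.

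The point needing the most care is the uniformity of the implied constants over $A \in \calA$ and their stability through the finitely many levels of the induction. The non-recursive estimate already supplies a constant uniform over the compact set $\calA$; since the recursion has depth at most $\dim T$ and each level contributes only the bounded facet count, the accumulated constant stays uniform. One must also ensure that the operator $\Delta_A$ on each subsimplex uses the one-form attached to that subsimplex in $\calA$, and that the equivalence $\|u\|_{\rmH^1(\partial T)} \approx \sum_F \|u\|_{\rmH^1(F)}$ is read in its degenerate form when $n = 1$, where $\partial T$ is a pair of points and the facet norms collapse to vertex moduli, matching the $\sum_{x\in T^0}|u(x)|$ term directly.
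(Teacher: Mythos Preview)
Your proposal is correct and follows exactly the approach the paper intends: the paper's proof consists solely of the sentence ``By induction we deduce,'' and you have filled in precisely the induction on $\dim T$ driven by the preceding non-recursive $\rmH^1$ estimate, with the facet decomposition of $\partial T$ and the combinatorial bookkeeping that the paper leaves implicit.
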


\paragraph{Bounds on $\rmH^2$ norms}

\begin{lemma}\label{lem:htwoext}
Any element of $ \in \rmH^2_\rec(\partial T)$ has an extension to $T$ which is in $\rmH^2(T)$. More precisely, a linear bounded extension operator $\rmH^2_\rec(\partial T) \to \rmH^2(T)$ can be constructed.
\end{lemma}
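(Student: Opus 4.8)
The plan is to reduce to a reference simplex and then argue by induction on $n = \dim T$, building the extension one skeleton at a time; the recursive definition of $\rmH^2_\rec$ is tailored to exactly this induction. An affine change of variables maps $T$ onto a fixed reference simplex and induces bounded isomorphisms $\rmH^2(T) \cong \rmH^2(\hat T)$ and $\rmH^2_\rec(\partial T) \cong \rmH^2_\rec(\partial \hat T)$, so it suffices to construct a bounded linear operator $E_T : \rmH^2_\rec(\partial T) \to \rmH^2(T)$ on $\hat T$. In the relevant dimensions ($n \le 3$) we have $\rmH^2 \hookrightarrow \rmC^0$, so vertex values are well defined, while membership in $\rmH^2_\rec$ guarantees that the restrictions of a datum $g$ to all faces are genuine $\rmH^2$ functions, compatible along shared subfaces. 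The base case $n = 1$ is immediate: $\rmH^2_\rec(\partial T)$ is a pair of vertex values, and the affine interpolant is a smooth extension depending linearly and boundedly on the data.

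For the inductive step I would write $E_T g = u^{(0)} + u^{(1)} + \cdots + u^{(n-1)}$, where $u^{(k)}$ corrects the data on the $k$-dimensional faces. Take $u^{(0)}$ to be the affine interpolant of the vertex values $g(x)$, $x \in T^0$; it is smooth, agrees with $g$ on $T^0$, and is bounded linear in the data. Suppose a partial extension $v_{k-1} = u^{(0)} + \cdots + u^{(k-1)} \in \rmH^2(T)$ has been built that already agrees with $g$ on the $(k-1)$-skeleton. For each $k$-face $F \subcelln T$, consider the residual $h_F := (g - v_{k-1})|_F \in \rmH^2(F)$; since $\partial F$ lies in the $(k-1)$-skeleton, $h_F$ vanishes there. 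The core subproblem is to extend such an $h_F$ into $T$ to a function $L_F h_F \in \rmH^2(T)$ that equals $h_F$ on $F$ and vanishes on every other $k$-face and on the whole $(k-1)$-skeleton. Setting $u^{(k)} = \sum_F L_F h_F$ then yields $v_k$ agreeing with $g$ on the $k$-skeleton, and $v_{n-1} = E_T g$ is the desired extension.

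To build $L_F$, I would use coordinates in which $F$ lies in a coordinate hyperplane, lift in the transverse direction by a fixed profile $\phi$ with $\phi(0)=1$, $\phi'(0)=0$ and small support, and localize near the relative interior of $F$ by a tangential cutoff vanishing near $\partial F$. Because $h_F$ vanishes on $\partial F$, Hardy-type inequalities should control $\| L_F h_F \|_{\rmH^2(T)} \cleq \| h_F \|_{\rmH^2(F)}$ while recovering $h_F$ on $F$; localization away from $\partial F$ forces $L_F h_F$ to vanish on the other $k$-faces and on the lower skeleton, so the summands glue to an $\rmH^2(T)$ function without spurious jumps in the first derivatives across faces. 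Each operation is linear and the norm bounds compose, giving boundedness of $E_T$.

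The main obstacle is precisely this transverse-lifting-with-tangential-localization step near the lower-dimensional skeleton: a function $h_F \in \rmH^2(F)$ with $h_F|_{\partial F}=0$ need not lie in $\rmH^2_0(F)$, since its normal derivative along $\partial F$ may be nonzero, so one cannot simply multiply by a sharp cutoff. Making the localization quantitative — so that cutoffs concentrating near $\partial F$ have an $\rmH^2(T)$-bounded effect on $h_F$ — is exactly where the vanishing trace on $\partial F$ and the compatibility built into $\rmH^2_\rec$ (matching restrictions on all shared subfaces) must be exploited, through weighted estimates measuring $h_F$ against $\dist(\cdot,\partial F)$. Everything else — the affine interpolation, the transverse tensor lifting, and the summation over faces — is routine once this estimate is in hand.
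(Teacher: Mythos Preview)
Your inductive framework --- peeling off an affine interpolant of vertex values and then correcting residuals $h_F \in \rmH^2(F)\cap\rmH^1_0(F)$ one skeleton at a time --- is exactly the recursive construction the paper invokes (its ``second step''). The divergence, and the gap, is in the single-face operator $L_F$.

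You propose to lift $h_F$ transversally and then localize \emph{tangentially} near the interior of $F$, and you correctly flag the obstacle: $h_F$ vanishes on $\partial F$ but its normal derivative need not, so $h_F\notin\rmH^2_0(F)$ and a tangential cutoff is not innocuous in $\rmH^2$. Concretely, a cutoff $\chi_\epsilon$ supported at distance $\geq\epsilon$ from $\partial F$ has $|\nabla^2\chi_\epsilon|\sim\epsilon^{-2}$, while $h_F$ vanishes only to first order there, so the product is not uniformly bounded in $\rmH^2$. You gesture at Hardy-type weighted estimates as a cure but do not carry them out; that is precisely the hard part, and as stated the argument is incomplete.

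The paper sidesteps the issue entirely (its ``first step''). It first extends $h_F$ to an $\rmH^2$ function $\tilde h$ on the whole affine subspace spanned by $F$ --- a standard Stein-type extension from a Lipschitz domain --- and then extends to $T$ by pullback along the linear retraction onto that affine subspace (projecting away from the complementary face of $T$), multiplied by a smooth cutoff in the transverse barycentric coordinate only. No tangential localization is needed: any point on another face of $T$ of dimension $\dim F$ retracts to a point of $\partial F$, where $h_F$ already vanishes, so $L_F h_F$ is automatically zero there and on the entire lower skeleton. The $\rmH^2(T)$ bound follows directly from the $\rmH^2$ bound for $\tilde h$ and smoothness of the retraction away from the complementary face, where the transverse cutoff kills everything. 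That affine-span extension, replacing your tangential cutoff, is the idea missing from your proposal.
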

\begin{proof}
The existence of the extension can be shown exactly as in the proof of Proposition 3.3 in the preprint of \cite{Chr07NM}. 

The first step is to show that, for any face $T'$ of $T$, if $v \in \rmH^2(T') \cap \rmH^1_0(T')$ it can be extended first to an $\rmH^2$ function on the affine space spanned by $T'$ and then extended to all of $T$ by pullback and cut-off, in such a way that the extension is $0$ on all faces of $T$ with dimension $\dim T'$, except of course $T'$ itself. 

The second step is a recursive construction of the extensions, detailed in Proposition 2.2 in \cite{ChrRap13}.
\end{proof}

\begin{proposition}\label{prop:htworec}
Let $T$ be a simplex. For $u \in \rmH^2_\rec(T)$ and $A \in \calA $ we have an estimate:
\begin{equation}
\| u\|_{\rmH^2(T)} \cleq \| \Delta_A u \|_{\rmL^2(T)} + \| u\|_{\rmH^2_\rec(\partial T)}.
\end{equation}

\end{proposition}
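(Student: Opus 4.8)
The plan is to reduce to a homogeneous Dirichlet problem by subtracting off the boundary data, and then to invoke $\rmH^2$ elliptic regularity on the convex domain $T$ together with the lower-order control already available from the Poincaré estimate (\ref{eq:poinc}). Since $T$ is a simplex it is convex, which is precisely what makes the a priori $\rmH^2$ bound for the Dirichlet Laplacian available.

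First I would use Lemma \ref{lem:htwoext}: the trace $u|_{\partial T}$ lies in $\rmH^2_\rec(\partial T)$ by the very definition of $\rmH^2_\rec(T)$, so there is an extension $v \in \rmH^2(T)$ with $v|_{\partial T} = u|_{\partial T}$ and $\|v\|_{\rmH^2(T)} \cleq \|u\|_{\rmH^2_\rec(\partial T)}$. Put $w = u - v$, so that $w \in \rmH^2(T) \cap \rmH^1_0(T)$. Because $A$ is constant we have $\Delta_A v = \Delta v + 2i A \cdot \nabla v - |A|^2 v$, whence $\|\Delta_A v\|_{\rmL^2(T)} \cleq \|v\|_{\rmH^2(T)}$ with a constant depending only on the bound on $\calA$; consequently
\[
\|\Delta_A w\|_{\rmL^2(T)} \cleq \|\Delta_A u\|_{\rmL^2(T)} + \|u\|_{\rmH^2_\rec(\partial T)}.
\]
It therefore suffices to prove the homogeneous estimate $\|w\|_{\rmH^2(T)} \cleq \|\Delta_A w\|_{\rmL^2(T)}$ for $w \in \rmH^2(T) \cap \rmH^1_0(T)$, uniformly in $A \in \calA$, since combining it with the extension bound gives the claim.

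For the homogeneous estimate I would proceed in two steps. The lower-order step controls $w$ in $\rmH^1$: integrating by parts (legitimate since $w \in \rmH^1_0(T)$) gives $\|\nabla_A w\|_{\rmL^2(T)}^2 = \langle -\Delta_A w, w\rangle \leq \|\Delta_A w\|_{\rmL^2(T)} \|w\|_{\rmL^2(T)}$, and the Poincaré inequality (\ref{eq:poinc}), whose boundary term vanishes for $w \in \rmH^1_0(T)$, yields $\|w\|_{\rmL^2(T)} \cleq \|\nabla_A w\|_{\rmL^2(T)}$ with a constant uniform over $\calA$. These two facts together give $\|w\|_{\rmH^1(T)} \cleq \|\Delta_A w\|_{\rmL^2(T)}$. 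The top-order step then uses the standard $\rmH^2$ regularity of the Dirichlet Laplacian on the convex domain $T$: for $w \in \rmH^2(T) \cap \rmH^1_0(T)$ one has $\|w\|_{\rmH^2(T)} \cleq \|\Delta w\|_{\rmL^2(T)}$. Writing $\Delta w = \Delta_A w - 2i A \cdot \nabla w + |A|^2 w$ and bounding the two extra terms by $\|w\|_{\rmH^1(T)}$, uniformly over $\calA$, one obtains $\|w\|_{\rmH^2(T)} \cleq \|\Delta_A w\|_{\rmL^2(T)} + \|w\|_{\rmH^1(T)}$, and the $\rmH^1$ bound of the previous step absorbs the last term.

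The main obstacle is the top-order regularity input $\|w\|_{\rmH^2(T)} \cleq \|\Delta w\|_{\rmL^2(T)}$ on the simplex. Unlike the $\rmL^2$ and $\rmH^1$ propositions, which needed only Rellich compactness and the trivial kernel of $\nabla_A$, this estimate genuinely relies on the convexity of $T$, and one must appeal to the $\rmH^2$ regularity theory for the Dirichlet problem on convex polyhedra rather than to a soft compactness argument. A more self-contained alternative, paralleling the proof of the $\rmH^1$ proposition, would run a contradiction argument: normalize $\|u_n\|_{\rmH^2(T)} = 1$ with the right-hand side tending to $0$, extract $A_n \to A$ and a weak $\rmH^2$ limit, use Rellich to pass to the limit in $\Delta_{A_n} w_n$, and conclude. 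That route still requires exactly the same convex-domain $\rmH^2$ a priori bound in order to upgrade the $\rmH^1$ convergence to $\rmH^2$, so it does not avoid the essential difficulty.
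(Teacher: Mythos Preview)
Your proof is correct and follows essentially the same route as the paper: use Lemma~\ref{lem:htwoext} to subtract off an $\rmH^2$ extension of the boundary data, control $\|w\|_{\rmH^1}$ via integration by parts and the Poincar\'e inequality~(\ref{eq:poinc}), write $\Delta w = \Delta_A w - 2iA\cdot\nabla w + |A|^2 w$, and invoke $\rmH^2$ elliptic regularity for the Dirichlet Laplacian on the convex simplex. The paper packages these same ingredients as a contradiction argument (normalizing $\|u_n\|_{\rmH^2}=1$ and extracting a convergent $A_n$), whereas you give the direct a~priori estimate; as you yourself observe in the final paragraph, the two presentations are equivalent and rest on the same convex-regularity input.
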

\begin{proof}
Suppose now that the estimate does not hold. Choose a sequence $u_n$ in $\rmH^2_\rec(T)$ such that:
\begin{align}
\| u_n\|_{\rmH^2(T)} & = 1,\\
\| \Delta_{A_n} u_n \|_{\rmL^2(T)} + \| u_n \|_{\rmH^2_\rec(\partial T)} & \to 0.
\end{align}
Using the preceding proposition, choose an extension $v_n$ of $u_n|_{\partial T}$ to $T$ such that:
\begin{equation}
\| v_n\|_{\rmH^2(T)} \to 0.
\end{equation}

Setting $w_n = u_n - v_n \in \rmH^2(T) \cap \rmH^1_0(T)$ we have:
\begin{align}
\| w_n\|_{\rmH^2(T)} & \to 1,\label{eq:wntoone}\\
\| \Delta_{A_n} w_n \|_{\rmL^2(T)} & \to 0.
\end{align}

We have:
\begin{equation}
\| \nabla_{A_n} w_n \|_{\rmL^2(T)}^2 \leq \| \Delta_{A_n} w_n \|_{\rmL^2(T)} \, \| w_n\|_{\rmL^2(T)} \to 0,
\end{equation}
Hence $w_n$ converges to $0$ in $\rmH^1(T)$. We then have:

\begin{align}
\| \Delta w_n \|_{\rmL^2(T)} \leq&  \| \Delta_{A_n} w_n \|_{\rmL^2(T)} +\\
& \quad (2 \| A_n\|_{\rmL^\infty} + \|A_n\|_{\rmL^\infty}^2 + \|\div A_n\|_{\rmL^\infty})\| w_n\|_{\rmH^1(T)},\\
 \to&  0.
\end{align}

By elliptic regularity we deduce that $w_n$ converges to $0$ in $\rmH^2(T)$. This contradicts (\ref{eq:wntoone}).

\end{proof}

\begin{remark} Proposition \ref{prop:htworec} does not hold on arbitrary cells. It fails for instance for cells that have several adjacent faces which are coplanar.
\end{remark}

\begin{remark} Lemma \ref{lem:htwoext} and its proof can be extended verbatim to $\rmH^k_\rec$ for $k \geq 2$. However we only need it in conjunction with Proposition \ref{prop:htworec}, which is limited to the case $k=2$, since elliptic regularity on simplexes does not give $\rmH^3$ estimates.
\end{remark}

By induction we deduce:
\begin{proposition}\label{prop:bhhtwo}
Suppose  $u \in \rmH^1_\rec(T) $ satisfies, for each $T' \subcellp T$, $\Delta u \in \rmL^2(T')$. Then $u \in \rmH^2_\rec(T)$ and we have an estimate: 

\begin{equation}
\sum_{T' \subcellp T} |u|_{\rmH^2(T)} \cleq \sum_{T' \subcellp T} \| \Delta_A u \|_{\rmL^2(T)} + \sum_{x \in T^0} |u(x)|.
\end{equation}
\end{proposition}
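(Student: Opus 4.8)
The plan is to argue by induction on $n = \dim T$, in the same way that Propositions \ref{prop:hzerorec} and \ref{prop:honerec} were deduced from their single-simplex counterparts, using Proposition \ref{prop:htworec} as the building block on each individual simplex. Before estimating anything, however, I first need to verify that the hypotheses really place $u$ in $\rmH^2_\rec(T)$, so that the seminorms $|u|_{\rmH^2(T')}$ on the left-hand side are meaningful. (Throughout I use that, since $A$ is constant, $\Delta_A u \in \rmL^2(T')$ is equivalent to $\Delta u \in \rmL^2(T')$ for $u \in \rmH^1_\rec(T)$, the difference being a first-order term controlled by the $\rmH^1$ norm.)

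For the regularity, I would induct on the dimension of the faces $T'$. On an edge $e$ one has $u|_e \in \rmH^1(e)$ and $(u|_e)'' = \Delta(u|_e) \in \rmL^2(e)$, hence $u|_e \in \rmH^2(e)$. For a face $T'$ of dimension $k \geq 2$, the inductive hypothesis gives $u|_{\partial T'} \in \rmH^2_\rec(\partial T')$; using the extension operator of Lemma \ref{lem:htwoext} I subtract an $\rmH^2(T')$ extension $w$ of $u|_{\partial T'}$, so that $u|_{T'} - w \in \rmH^1_0(T')$ with Laplacian in $\rmL^2(T')$, and then invoke elliptic regularity for the homogeneous Dirichlet problem on the convex domain $T'$ (the same ingredient already used in Proposition \ref{prop:htworec}) to conclude $u|_{T'} \in \rmH^2(T')$. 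This yields $u \in \rmH^2_\rec(T)$.

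For the estimate, the inductive step applies Proposition \ref{prop:htworec} to $T$ itself, giving $|u|_{\rmH^2(T)} \cleq \| \Delta_A u \|_{\rmL^2(T)} + \| u \|_{\rmH^2_\rec(\partial T)}$. The remaining left-hand-side terms, namely the seminorms $|u|_{\rmH^2(T')}$ over proper faces $T'$ of dimension $\geq 1$, are bounded by applying the inductive hypothesis to each codimension-one face of $T$: every such $T'$ is a subsimplex of at least one of them, with multiplicity at most $n+1$, which is absorbed into $\cleq$. It then remains to control the boundary term $\| u \|_{\rmH^2_\rec(\partial T)}$. I would split this recursive norm into its contributions on subsimplices: the $\rmH^2$-seminorm parts are absorbed by the same inductive hypothesis on the faces; the first-order parts $\sum_{T' \subcellp T} \| u \|_{\rmH^1(T')}$ are handled by Proposition \ref{prop:honerec} together with the crude bound $\| \Delta_A u \|_{\rmH^{-1}(T')} \cleq \| \Delta_A u \|_{\rmL^2(T')}$; and the vertex contributions $|u(x)|$ land directly on the right-hand side. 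Summing everything completes the induction.

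The main obstacle is organizational rather than analytic: a naive summation of Proposition \ref{prop:htworec} over all subsimplices is circular, since the full $\rmH^2$ norm of $u$ on a face reappears through the recursive boundary norm $\| u \|_{\rmH^2_\rec(\partial T)}$. The dimension induction is precisely what breaks this circularity, because the boundary data on $T$ lives only on strictly lower-dimensional faces. The one point requiring genuine care is that Proposition \ref{prop:htworec} controls the \emph{full} $\rmH^2$ norm by the full recursive boundary norm, whereas the conclusion we want features only $\| \Delta_A u \|_{\rmL^2}$ and vertex values; hence one must peel off the first-order content of the boundary norm and re-express it through the $\rmH^1$ estimate of Proposition \ref{prop:honerec} before the right-hand side takes its final form.
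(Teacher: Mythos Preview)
Your proposal is correct and follows exactly the approach the paper intends: the paper's own proof is simply the sentence ``By induction we deduce'', and your argument spells out precisely that induction on $\dim T$, using Proposition~\ref{prop:htworec} on each simplex together with Proposition~\ref{prop:honerec} to absorb the lower-order contributions of the recursive boundary norm. Your preliminary regularity step (showing $u\in\rmH^2_\rec(T)$ via Lemma~\ref{lem:htwoext} and convex elliptic regularity) is also the natural way to justify the claim in the statement, even though the paper leaves it implicit.
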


\paragraph{More bounds.}

In this paragraph we are interested in bounding various quantities by a recursive norm based only on the covariant Laplacian and norms attached to the whole simplex $T$.

\begin{proposition}\label{prop:pointbound} For $u \in \rmH_\rec^1(T)$ and $A\in \calA$, we have an estimate:
\begin{equation}
 \sum_x |u(x)| \cleq \sum_{T' \subcellp T} \| \Delta_A u \|_{\rmH^{-1}(T')} + \|u\|_{\rmL^2(T)} .
\end{equation}
\end{proposition}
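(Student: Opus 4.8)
The plan is to argue by contradiction and compactness, exactly in the spirit of the proofs of Propositions \ref{prop:hzerorec}--\ref{prop:htworec}. The content of the statement is that it \emph{upgrades} Proposition \ref{prop:honerec}: there the vertex term $\sum_{x} |u(x)|$ sits on the right-hand side, whereas here I wish to replace it by the much weaker quantity $\|u\|_{\rmL^2(T)}$. This is a Peetre--Tartar type improvement, in which a strong vertex term is traded against a compact one; a direct estimate cannot achieve it, so a compactness argument is the natural route.

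Suppose the inequality fails. Then I can choose sequences $u_n \in \rmH^1_\rec(T)$ and $A_n \in \calA$ with
\begin{align}
\sum_{x \in T^0} |u_n(x)| & = 1, \\
\sum_{T' \subcellp T} \| \Delta_{A_n} u_n \|_{\rmH^{-1}(T')} + \| u_n\|_{\rmL^2(T)} & \to 0.
\end{align}
Since $\calA$ is compact I may assume $A_n \to A$. The constant in Proposition \ref{prop:honerec} is uniform over $\calA$ (its proof proceeds by the same compactness device), so applying that proposition with $A = A_n$ gives
\begin{equation}
\sum_{T' \subcellp T} \| u_n\|_{\rmH^1(T')} \cleq \sum_{T' \subcellp T} \| \Delta_{A_n} u_n \|_{\rmH^{-1}(T')} + \sum_{x \in T^0} |u_n(x)|,
\end{equation}
whose right-hand side stays bounded, the Laplacian part tending to $0$ and the vertex sum being equal to $1$. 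Hence $u_n$ is bounded in $\rmH^1(T')$ for every $T' \subcellp T$.

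I then extract a subsequence converging weakly in $\rmH^1(T')$ and, by Rellich, strongly in $\rmL^2(T')$ for each subsimplex $T'$; call the limit $u$. On each edge (a one-dimensional $T'$) the embedding $\rmH^1(T') \hookrightarrow \rmC^0(T')$ is compact, so after a further extraction the vertex values converge, $u_n(x) \to u(x)$, and therefore $\sum_{x \in T^0} |u(x)| = 1$. On the other hand, the strong $\rmL^2(T)$ convergence combined with $\|u_n\|_{\rmL^2(T)} \to 0$ forces $\|u\|_{\rmL^2(T)} = 0$, that is, $u = 0$ on the top-dimensional simplex $T$. Note that the Laplacian convergence is used only to obtain the boundedness above; it is not needed to identify the limit.

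The contradiction now comes from the recursive structure. Since $u \in \rmH^1(T)$ vanishes on $T$, it is the zero element of $\rmH^1(T)$, so all of its face-restrictions vanish as well, recursively down to the vertices, giving $u(x) = 0$ for every $x \in T^0$ and contradicting $\sum_{x \in T^0} |u(x)| = 1$. The only delicate point, and the one I would state with care, is precisely this last propagation: one must use that in $\rmH^1_\rec(T)$ the values on lower-dimensional faces are not independent data but iterated traces of $u|_T$, so that vanishing of $u$ on the full simplex forces vanishing on every subsimplex. Granting this trace consistency, which is built into the definition of $\rmH^1_\rec(T)$, the argument closes.
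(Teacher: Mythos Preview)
Your proof is correct and follows essentially the same approach as the paper's: contradiction, boundedness in $\rmH^1_\rec(T)$ via Proposition \ref{prop:honerec}, weak compactness, and then the contradiction coming from compactness of the trace from $\rmH^1$ on edges to vertex values. Your write-up is in fact more explicit than the paper's, which simply asserts that ``the limit must be $0$''; you spell out the trace-consistency step that propagates vanishing on $T$ down through the subsimplices, which is exactly the mechanism the paper is relying on implicitly.
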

\begin{proof}
If not, choose $u_n$ so that:
\begin{align}
 \sum_x |u_n(x)| & = 1, \label{eq:ueqone}\\
\sum_{T' \subcellp T} \| \Delta_{A_n} u \|_{\rmH^{-1}(T')} + \|u\|_{\rmL^2(T)} & \to 0.
\end{align}
The sequence is bounded in $\rmH^1_\rec(T)$, by Proposition \ref{prop:honerec}. Extract a subsequence which converges weakly in $\rmH^1_\rec(T)$. The limit must be $0$. This contradicts (\ref{eq:ueqone}), because, on edges, the trace operator is compact from $\rmH^1(T')$ to vertex values.
 \end{proof}

\begin{proposition}\label{prop:ltwobound}
Let $T$ be a simplex.

We have the following estimate. For all $A\in \calA$ and  $u \in \rmH^1_\rec(T)$ we have:

\begin{equation}
\sum_{T' \subcell T} \|  u \|_{\rmL^2(T')} \cleq \sum_{T' \subcellp T} \|\Delta_A u\|_{\rmH^{-1}(T')} + \| u \|_{\rmL^2(T)} . 
\end{equation}
\end{proposition}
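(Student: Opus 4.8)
The plan is to derive this directly from the two preceding recursive bounds, Propositions~\ref{prop:honerec} and \ref{prop:pointbound}, with no fresh compactness argument. Write $\calR := \sum_{T' \subcellp T} \| \Delta_A u \|_{\rmH^{-1}(T')} + \| u \|_{\rmL^2(T)}$ for the target right-hand side. The first step is to split the left-hand side according to whether the subsimplex is a point or not,
\begin{equation}
\sum_{T' \subcell T} \| u \|_{\rmL^2(T')} = \sum_{T' \subcellp T} \| u \|_{\rmL^2(T')} + \sum_{x \in T^0} |u(x)|,
\end{equation}
where the $\rmL^2$ norm on a $0$-dimensional face $\{x\}$ is read as the vertex value $|u(x)|$. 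It then suffices to bound each of the two sums on the right by $\calR$.

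The vertex sum is immediate, since Proposition~\ref{prop:pointbound} gives $\sum_{x} |u(x)| \cleq \calR$ by definition of $\calR$. For the non-point faces I would first discard the derivative terms using $\| u \|_{\rmL^2(T')} \le \| u \|_{\rmH^1(T')}$ on each face, and then invoke Proposition~\ref{prop:honerec}:
\begin{equation}
\sum_{T' \subcellp T} \| u \|_{\rmL^2(T')} \le \sum_{T' \subcellp T} \| u \|_{\rmH^1(T')} \cleq \sum_{T' \subcellp T} \| \Delta_A u \|_{\rmH^{-1}(T')} + \sum_{x \in T^0} |u(x)|.
\end{equation}
The first term is part of $\calR$, and the vertex sum produced here is in turn controlled by $\calR$ via Proposition~\ref{prop:pointbound}; hence $\sum_{T' \subcellp T} \| u \|_{\rmL^2(T')} \cleq \calR$. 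Adding the two contributions and absorbing the constants hidden in $\cleq$ gives the claim.

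There is essentially no obstacle beyond bookkeeping: the key point is that Proposition~\ref{prop:pointbound} already trades vertex values for $\Delta_A u$-terms plus a single $\| u \|_{\rmL^2(T)}$, so the vertex terms generated by Proposition~\ref{prop:honerec} are absorbed rather than left to propagate. The one thing to pin down at the start is the convention for the $\rmL^2$ norm on $0$-simplices; with the reading above the splitting identity is exact. As an alternative consistent with the style of this section, the estimate can be proved by the same contradiction-and-compactness scheme: normalise $\sum_{T' \subcell T} \| u_n \|_{\rmL^2(T')} = 1$, use Proposition~\ref{prop:honerec} together with the normalisation to bound $u_n$ in $\rmH^1_\rec(T)$, extract a weak $\rmH^1_\rec$ limit $u$, observe that $\| u \|_{\rmL^2(T)} = 0$ forces $u = 0$ on $T$ and hence on all faces by taking traces, and conclude via Rellich compactness on each face and compactness of vertex evaluation that $\sum_{T' \subcell T} \| u_n \|_{\rmL^2(T')} \to 0$, contradicting the normalisation.
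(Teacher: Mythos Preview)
Your proof is correct and follows essentially the same route as the paper: combine Proposition~\ref{prop:honerec} to trade $\rmL^2$ (via $\rmH^1$) norms on faces for $\rmH^{-1}$ Laplacian terms plus vertex values, then invoke Proposition~\ref{prop:pointbound} to absorb the vertex values into $\calR$. The only cosmetic difference is that the paper bounds each $\| u \|_{\rmL^2(S)}$ individually by applying Proposition~\ref{prop:honerec} on the subsimplex $S$ and then enlarging the sums from $S$ to $T$, whereas you apply Proposition~\ref{prop:honerec} once on $T$ to control the whole sum; both organisations are equivalent.
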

\begin{proof}
We simply write, for any subsimplex $S$ of $T$:
\begin{align}
\|  u \|_{\rmL^2(S)}  & \cleq \sum_{T' \subcellp S} \|\Delta_A u\|_{\rmH^{-1}(T')} + \sum_{x \in S^{0}} |u_x|,\\
& \cleq \sum_{T' \subcellp T} \|\Delta_A u\|_{\rmH^{-1}(T')} + \sum_{x \in T^{ 0}} |u_x|, \\
& \cleq \sum_{T' \subcellp T} \|\Delta_A u\|_{\rmH^{-1}(T')} + \| u \|_{\rmL^2(T)}, 
\end{align}
using Propositions \ref{prop:honerec} and \ref{prop:pointbound}.
\end{proof}

\begin{proposition}\label{prop:gradbound} 
Let $T$ be a simplex. For $u \in \rmH^1_\rec(T)$ and $A \in \cal A$ we have a bound:
\begin{equation}
\sum_{T' \subcellp T} \| \nabla u\|_{\rmL^2(T')} \cleq \sum_{T' \subcellp T} \| \Delta_A u \|_{\rmH^{-1}(T')} + \| \nabla u\|_{\rmL^2(T)} + \|A\|_{\rmL^\infty} \|u\|_{\rmL^2(T)}.
\end{equation}
\end{proposition}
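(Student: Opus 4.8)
The plan is to reduce everything to the case $A=0$ on the faces and then reintroduce the gauge potential by a perturbation that is first order in $A$ but, crucially, never costs a full gradient.

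First I would dispose of the case $A=0$. Combining Proposition \ref{prop:honerec} (with $A=0$), which controls $\sum_{T'}\|u\|_{\rmH^1(T')}$ and hence $\sum_{T'}\|\nabla u\|_{\rmL^2(T')}$, with Proposition \ref{prop:pointbound} (with $A=0$) to absorb the vertex values, one obtains
\begin{equation}
\sum_{T'\subcellp T}\|\nabla u\|_{\rmL^2(T')}\cleq \sum_{T'\subcellp T}\|\Delta u\|_{\rmH^{-1}(T')}+\|u\|_{\rmL^2(T)}.
\end{equation}
The three quantities $\sum_{T'}\|\nabla u\|_{\rmL^2(T')}$, $\sum_{T'}\|\Delta u\|_{\rmH^{-1}(T')}$ and $\|\nabla u\|_{\rmL^2(T)}$ are all unchanged when a constant is added to $u$, so I may apply this inequality to $u-\bar u$, where $\bar u$ denotes the average of $u$ on $T$. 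Since $\Delta(u-\bar u)=\Delta u$ and, by the mean-value Poincar\'e inequality, $\|u-\bar u\|_{\rmL^2(T)}\cleq\|\nabla u\|_{\rmL^2(T)}$, this yields the sharpened $A=0$ estimate
\begin{equation}\label{eq:gradbound-zero}
\sum_{T'\subcellp T}\|\nabla u\|_{\rmL^2(T')}\cleq \sum_{T'\subcellp T}\|\Delta u\|_{\rmH^{-1}(T')}+\|\nabla u\|_{\rmL^2(T)},
\end{equation}
in which the free $\rmL^2$ term has been replaced by a gradient term.

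Next I would reintroduce $A$. Since $A$ is constant on each subsimplex, $\nabla\cdot A=0$ there, and the expansion of the covariant Laplacian gives, on each $T'$,
\begin{equation}
\Delta u=\Delta_A u-2i\,A\cdot\nabla u+|A|^2 u=\Delta_A u-2i\,\div(Au)+|A|^2u.
\end{equation}
The decisive point is the identity $A\cdot\nabla u=\div(Au)$, valid precisely because $A$ is divergence free: it exhibits the first order term as a pure divergence, whose $\rmH^{-1}(T')$ norm is bounded by $\|Au\|_{\rmL^2(T')}\leq\|A\|_{\rmL^\infty}\|u\|_{\rmL^2(T')}$ rather than by $\|\nabla u\|_{\rmL^2(T')}$. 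Inserting the three pieces into \eqref{eq:gradbound-zero}, and using that $\calA$ is bounded to absorb the quadratic factor $\|A\|_{\rmL^\infty}^2$ into $\|A\|_{\rmL^\infty}$, produces
\begin{equation}
\sum_{T'\subcellp T}\|\nabla u\|_{\rmL^2(T')}\cleq \sum_{T'\subcellp T}\|\Delta_A u\|_{\rmH^{-1}(T')}+\|A\|_{\rmL^\infty}\sum_{T'\subcellp T}\|u\|_{\rmL^2(T')}+\|\nabla u\|_{\rmL^2(T)}.
\end{equation}
Finally I would bound the middle sum by Proposition \ref{prop:ltwobound}, namely $\sum_{T'}\|u\|_{\rmL^2(T')}\cleq\sum_{T'}\|\Delta_A u\|_{\rmH^{-1}(T')}+\|u\|_{\rmL^2(T)}$; multiplying by $\|A\|_{\rmL^\infty}$ and again invoking the boundedness of $\calA$ turns its first contribution into the already-present Laplacian sum and its second into the admissible term $\|A\|_{\rmL^\infty}\|u\|_{\rmL^2(T)}$. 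Collecting terms gives the claim.

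The step I expect to be the crux is the combination of the constant-invariance trick (replacing the free $\|u\|_{\rmL^2(T)}$ by $\|\nabla u\|_{\rmL^2(T)}$ via Poincar\'e) with the divergence rewriting of $A\cdot\nabla u$. A naive bound $\|A\cdot\nabla u\|_{\rmH^{-1}}\leq\|A\|_{\rmL^\infty}\|\nabla u\|_{\rmL^2}$ would reproduce the left-hand side on the right with a factor $\|A\|_{\rmL^\infty}$ that cannot be absorbed when $\|A\|_{\rmL^\infty}$ is not small, so any bootstrap over $\calA$ would fail. It is exactly the constancy of $A$, hence $\div A=0$, that rescues the argument and forces the dependence on $u$ to enter only through $\|\nabla u\|_{\rmL^2(T)}$ and the $A$-weighted norm $\|A\|_{\rmL^\infty}\|u\|_{\rmL^2(T)}$.
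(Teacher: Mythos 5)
Your proof is correct, but it takes a genuinely different route from the paper. The paper argues by compactness and contradiction: it takes sequences $u_n$, $A_n$ violating the bound and splits into the case $\liminf_n\|A_n\|_{\rmL^\infty}>0$ (where the hypothesis forces $u_n\to 0$ in $\rmL^2(T)$ and Propositions \ref{prop:pointbound} and \ref{prop:honerec} give the contradiction) and the case $\|A_n\|_{\rmL^\infty}\to 0$ (where it subtracts the average $\tilde u_n$ and shows $u_n-\tilde u_n\to 0$ in $\rmH^1_\rec(T)$). Your argument is direct and quantitative: you first sharpen the $A=0$ estimate by the same average-subtraction/Poincar\'e idea, and then you absorb the gauge potential by the identity $A\cdot\nabla u=\div(Au)$, valid because $A$ is constant on each subsimplex, which bounds the first-order perturbation in $\rmH^{-1}(T')$ by $\|A\|_{\rmL^\infty}\|u\|_{\rmL^2(T')}$ instead of $\|A\|_{\rmL^\infty}\|\nabla u\|_{\rmL^2(T')}$; combined with Proposition \ref{prop:ltwobound} this closes the estimate without any smallness or compactness of $\calA$ (boundedness suffices), and in principle yields explicit constants. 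You correctly identified the crux: a naive $\rmH^{-1}$ bound on $A\cdot\nabla u$ would reproduce the left-hand side and the bootstrap would fail. The only pedantic caveat is that you invoke the $A=0$ instances of Propositions \ref{prop:honerec} and \ref{prop:pointbound}, which are literally stated for $A\in\calA$; since $\calA\cup\{0\}$ is still bounded (and the $A=0$ case is classical anyway), this is harmless.
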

\begin{proof}
Suppose that $u_n \in \rmH^1_\rec(T)$ and $A_n \in \calA$ are sequences such that:
\begin{align}
\sum_{T' \subcellp T} \| \nabla u\|_{\rmL^2(T')} & = 1, \label{eq:nablaone}\\
\sum_{T' \subcellp T} \| \Delta_{A_n} u_n \|_{\rmH^{-1}(T')} + \| \nabla u_n\|_{\rmL^2(T)} + \|A_n\|_{\rmL^\infty} \|u_n\|_{\rmL^2(T)} & \to 0.
\end{align}

Suppose first that:
\begin{equation}
\liminf_n \| A_n\|_{\rmL^\infty} >0.
\end{equation}
Then $u_n$ converges to $0$ in $\rmL^2(T)$, and we get a contradiction using Propositions \ref{prop:pointbound} and \ref{prop:honerec}.

Suppose, on the other hand, that we can extract a subsequence such that $\| A_n\|_{\rmL^\infty} \to 0$. Let $\tilde u_n$ be the average of $u_n$ on $T$. 

We then have:
\begin{equation}
\| u_n -\tilde u_n \|_{\rmH^1(T)} \to 0.
\end{equation}
We also have, for any face $T'$ of $T$:
\begin{align}
\| \Delta_{A_n} (u_n - \tilde u_n) \|_{\rmH^{-1}(T')} & \leq \| \Delta_{A_n} u_n \|_{\rmH^{-1}(T')} + \|\div_A (A \tilde u_n)\|_{\rmH^{-1}(T')},\\
& \cleq \| \Delta_{A_n} u_n \|_{\rmH^{-1}(T')} + \|A_n \tilde u_n \|_{\rmL^2(T')}\\
&  \cleq \| \Delta_{A_n} u_n \|_{\rmH^{-1}(T')} + \|A_n\|_{\rmL^\infty} \| \tilde u_n \|_{\rmL^2(T)},\\
& \to 0.
\end{align}
It follows that $u_n - \tilde u_n$ converges to $0$ in $\rmH^1_\rec(T)$, which contradicts
(\ref{eq:nablaone}).

This concludes the proof.
\end{proof}

\begin{proposition}\label{prop:htwobound}
Let $T$ be a simplex. For $u \in \rmH^2_\rec(T)$ and $A \in \cal A$, we have an estimate:

\begin{equation}
\sum_{T' \subcell T} |  u |_{\rmH^2(T')} \cleq  \sum_{T' \subcellp T} \| \Delta_A u\|_{\rmL^2(T)} + \| A\|_{\rmL^\infty} \| \nabla u \|_{\rmL^2(T)} + \| A\|_{\rmL^\infty} ^2 \| u \|_{\rmL^2(T)} . 
\end{equation}

\end{proposition}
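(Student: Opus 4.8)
The plan is to reduce the statement to the case $A=0$ and then to establish the resulting seminorm estimate by a compactness argument. Since the $\rmH^2$ seminorm vanishes on points, the left-hand sum over $\subcell T$ coincides with the sum over $\subcellp T$. The heart of the matter will be the clean $A=0$ bound, which I will call $(\star)$:
\[
\sum_{T' \subcell T} |u|_{\rmH^2(T')} \cleq \sum_{T' \subcellp T} \|\Delta u\|_{\rmL^2(T')},
\]
carrying \emph{no} zeroth order term on the right. Once $(\star)$ is available, the full estimate follows by a perturbation argument.

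For the reduction I use that $A$ is constant, so $\div A = 0$ and $\Delta_A u = \Delta u + 2iA\cdot\nabla u - |A|^2 u$. Hence on each $T'\subcellp T$ one has $\|\Delta u\|_{\rmL^2(T')} \leq \|\Delta_A u\|_{\rmL^2(T')} + 2\|A\|_{\rmL^\infty}\|\nabla u\|_{\rmL^2(T')} + \|A\|_{\rmL^\infty}^2\|u\|_{\rmL^2(T')}$. Summing over $T'\subcellp T$, I bound $\sum_{T'}\|\nabla u\|_{\rmL^2(T')}$ by Proposition \ref{prop:gradbound} and $\sum_{T'}\|u\|_{\rmL^2(T')}$ by Proposition \ref{prop:ltwobound}. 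Using that $\calA$ is bounded, so $\|A\|_{\rmL^\infty}\cleq 1$, and that $\|\Delta_A u\|_{\rmH^{-1}(T')}\cleq\|\Delta_A u\|_{\rmL^2(T')}$, all cross terms carrying a spare factor $\|A\|_{\rmL^\infty}$ get absorbed into the Laplacian sum, leaving
\[
\sum_{T'\subcellp T}\|\Delta u\|_{\rmL^2(T')} \cleq \sum_{T'\subcellp T}\|\Delta_A u\|_{\rmL^2(T')} + \|A\|_{\rmL^\infty}\|\nabla u\|_{\rmL^2(T)} + \|A\|_{\rmL^\infty}^2\|u\|_{\rmL^2(T)}.
\]
Combined with $(\star)$ this is precisely the asserted inequality.

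It thus remains to prove $(\star)$, and I would do this by contradiction. Suppose $u_n\in\rmH^2_\rec(T)$ satisfy $\sum_{T'\subcell T}|u_n|_{\rmH^2(T')}=1$ while $\sum_{T'\subcellp T}\|\Delta u_n\|_{\rmL^2(T')}\to 0$. Subtracting an affine function alters neither the $\rmH^2$ seminorms nor the intrinsic Laplacians on any $T'$, so I replace $u_n$ by $\tilde u_n$, obtained by subtracting the affine function sharing its mean value and mean gradient on $T$; by Poincaré, $\tilde u_n$ is bounded in $\rmH^2(T)$, and bounding the $\rmL^2$ and gradient norms on the faces by Propositions \ref{prop:ltwobound} and \ref{prop:gradbound} (with $A=0$) together with the seminorm normalization shows $\tilde u_n$ is bounded in $\rmH^2_\rec(T)$. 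Extracting a subsequence converging weakly in $\rmH^2(T')$ and, by Rellich, strongly in $\rmH^1(T')$ on each $T'$, the limit $u$ satisfies $\Delta u = 0$ on every $T'\subcellp T$; hence $u$ is harmonic on all subsimplices of positive dimension, therefore affine, and the normalization forces $u=0$.

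The final and hardest step is to upgrade this to strong convergence, since the normalizing quantity is only a seminorm and weak convergence alone does not make it vanish. On each edge the seminorm equals $\|\Delta \tilde u_n\|_{\rmL^2}\to 0$, which with strong $\rmH^1$ convergence yields $\|\tilde u_n\|_{\rmH^2(e)}\to 0$. Proceeding by induction on dimension, I apply the elliptic regularity estimate of Proposition \ref{prop:htworec} (with $A=0$) on each face $f$,
\[
\|\tilde u_n\|_{\rmH^2(f)} \cleq \|\Delta\tilde u_n\|_{\rmL^2(f)} + \|\tilde u_n\|_{\rmH^2_\rec(\partial f)},
\]
whose first term tends to $0$ by hypothesis and whose second tends to $0$ by the inductive hypothesis, giving $\|\tilde u_n\|_{\rmH^2(T')}\to 0$ for every $T'\subcell T$ and contradicting $\sum_{T'\subcell T}|\tilde u_n|_{\rmH^2(T')}=1$. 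I expect this recursive strong-convergence argument, run on the affine quotient and driven by Proposition \ref{prop:htworec} rather than by the weak limit, to be the main obstacle, precisely because the left-hand side is a pure seminorm and so the zeroth order term that appears in the cruder bound of Proposition \ref{prop:bhhtwo} must be eliminated here.
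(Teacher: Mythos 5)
Your proof is correct, but it is organized quite differently from the paper's. The paper attacks the $A$-dependent estimate head-on by contradiction on a pair of sequences $(u_n,A_n)$, splitting into the case $\liminf_n\|A_n\|_{\rmL^\infty}>0$, where the vanishing of $\|A_n\|_{\rmL^\infty}\|\nabla u_n\|_{\rmL^2(T)}+\|A_n\|_{\rmL^\infty}^2\|u_n\|_{\rmL^2(T)}$ forces $u_n\to 0$ in $\rmH^1(T)$ and Proposition \ref{prop:htworec} finishes, and the case $\|A_n\|_{\rmL^\infty}\to 0$, where it subtracts the best $\rmH^1$-approximating affine function and identifies the weak limit as an affine function orthogonal to affines, hence zero. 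You instead split the work into a deterministic reduction to $A=0$ --- expanding $\Delta_A=\Delta+2iA\cdot\nabla-|A|^2$ (using $\div A=0$ for constant $A$) and absorbing the commutator terms with Propositions \ref{prop:gradbound} and \ref{prop:ltwobound} --- followed by the clean $A$-free seminorm inequality $(\star)$, proved by compactness. This buys you two things: the case analysis on $\|A_n\|_{\rmL^\infty}$ disappears entirely, since no sequence of gauge potentials ever enters your compactness argument, and the one genuinely non-trivial compactness statement is isolated as a reusable Bramble--Hilbert-type inequality on the quotient by affine functions. The two proofs nevertheless share their essential mechanisms: subtraction of an affine approximant to compensate for the left-hand side being only a seminorm, the characterization of functions harmonic on every positive-dimensional subsimplex as the affine functions, and a recursive upgrade from weak to strong convergence (you drive it edge-by-face with Proposition \ref{prop:htworec}, while the paper feeds the vanishing vertex values and Laplacians into Proposition \ref{prop:bhhtwo}). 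Your reduction step is sound because $\calA$ is bounded, $\|\cdot\|_{\rmH^{-1}(T')}\cleq\|\cdot\|_{\rmL^2(T')}$ on the reference simplex, and the spare factors of $\|A\|_{\rmL^\infty}$ let every term land either in the Laplacian sum or in one of the two admissible right-hand side terms.
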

\begin{proof}
We proceed as in the preceding proof.

Suppose that $u_n \in \rmH^1_\rec(T)$ and $A_n \in \calA$ are sequences such that:
\begin{align}
\sum_{T' \subcellp T} |  u |_{\rmH^2(T')} & = 1, \label{eq:hessone}\\
\sum_{T' \subcellp T} \| \Delta_{A_n} u_n \|_{\rmL^{2}(T')} + \|A_n\|_{\rmL^\infty} \| \nabla u_n\|_{\rmL^2(T)} + \|A_n\|_{\rmL^\infty}^2 \|u_n\|_{\rmL^2(T)} & \to 0.
\end{align}

Suppose first that:
\begin{equation}
\liminf_n \| A_n\|_{\rmL^\infty} >0.
\end{equation}
Then we get a contradiction from Proposition \ref{prop:htworec}.

Suppose, on the other hand, that we can extract a subsequence such that $\| A_n\|_{\rmL^\infty} \to 0$. Let $\tilde u_n$ be the affine function which best approximates $u_n$ in $\rmH^1$ norm. We have:
\begin{equation}
\| u_n - \tilde u_n\|_{\rmH^2(T)} \cleq |u_n|_{\rmH^2(T)} \leq 1.
\end{equation}

We also have:
\begin{align}
& \| \Delta_{A_n} (u_n - \tilde u_n) \|_{\rmL^2(T')}\\  & \leq \| \Delta_{A_n} u_n \|_{\rmL^2(T')} +  \| \Delta_{A_n} \tilde u_n \|_{\rmL^2(T')},\\
 & \leq \| \Delta_{A_n} u_n \|_{\rmL^2(T')} + \|A_n\|_{\rmL^\infty} \| \nabla \tilde u_n\|_{\rmL^2(T')} + \|A_n\|_{\rmL^\infty}^2 \| \tilde u_n\|_{\rmL^2(T')},\\
& \leq \| \Delta_{A_n} u_n \|_{\rmL^2(T')} + \|A_n\|_{\rmL^\infty} \| \nabla \tilde u_n\|_{\rmL^2(T)} + \|A_n\|_{\rmL^\infty}^2 \| \tilde u_n\|_{\rmL^2(T)},\\
& \to 0. \label{eq:deltatozero}
 \end{align}

It follows that $u_n - \tilde u_n$ is bounded in $\rmH^2_\rec(T)$. Extract a weakly converging subsequence. The limit must be harmonic on every subsimplex, hence affine. It is also orthogonal to affine functions, hence $0$. Therefore the vertex values of $u_n - \tilde u_n$ converge to $0$. Combined with (\ref{eq:deltatozero}) this shows that $u_n - \tilde u_n$ converges to $0$ in $\rmH^2_\rec(T)$, contradicting (\ref{eq:hessone}).

\end{proof}

\section{Numerical method \label{sec:num}}

\subsection{Whitney forms \label{sec:wf}}

Given a simplicial mesh, the barycentric coordinate map associated with vertex $i$ is denoted $\lambda_i$. It is continuous, piecewise affine, has value $1$ at vertex $i$ and $0$ at the other ones, and is uniquely determined by these properties. 

We assume that all simplices in our mesh have been oriented. Given a simplex $T$ of dimension $k$, with vertices numbered from $0$ to $k$, the associated Whitney form is:
\begin{equation}
\lambda_T = k! \sum_{j = 0}^k (-1)^j \lambda_j \rmd \lambda_0 \wedge \ldots \widehat {\rmd \lambda_j } \ldots \wedge \rmd \lambda_k. 
\end{equation}
This sum depends on the numbering of the vertices only up to a sign, which is $1$ iff the numbering is compatible with the orientation of the simplex.

The span of the forms $\lambda_T$ with $T$ ranging through the $k$-dimensional simplexes in some simplicial complex $\calT$, is denoted  $\WF^k(\calT)$:
\begin{equation}
\WF^k(\calT) = \myspan \{ \lambda_T \ : \ T \in \calT^k \}.
\end{equation} 
Such forms are also called Whitney forms.

For instance given an edge $e$, oriented from the vertex $\dot e$ to the vertex $\ddot e$, the associated Whitney one-form is:
\begin{equation}
\lambda_e = \lambda_{\dot e} \rmd \lambda_{\ddot e} - \lambda_{\ddot e} \rmd \lambda_{\dot e}.
\end{equation}
When $A$ is a general Whitney one-form, we may write:
\begin{equation}
A = \sum_e A_e \lambda_e,
\end{equation}
with:
\begin{equation}
A_e = \int_e A,
\end{equation}
given that for each edge $e$, an orientation has been chosen.

We will also use the following notations. If $x$ and $y$ are two vertices we put:
\begin{equation}
A_{xy} = \begin{cases}
 \phantom{-} A_e & \myif x= \dot e \myand y = \ddot e,\\
 - A_e & \myif x = \ddot e \myand y = \dot e,\\
 \phantom{-} 0&  \quad \textrm{other cases.}
\end{cases}
\end{equation}
In fact this coincides with the definition:
\begin{equation}\label{eq:axy}
A_{xy} = \int_{[xy]} A,
\end{equation}
where $[xy]$ is the oriented edge connecting $x$ to $y$.

\subsection{Parallel transport}
Suppose $[xy]$ is the oriented edge connecting $x$ to $y$, and that $A$ is a constant one form on it. Given a value $u_y$ associated to $y$ we may solve, on $[xy]$, the ordinary differential equation for $u$:
\begin{equation}
\nabla_A u = 0.
\end{equation}
We then have a relation between the vertex values of $u$:
\begin{equation}
u_x = U_{xy} u_y,
\end{equation}
with:
\begin{equation}
U_{xy} = \exp(i A_{xy}),
\end{equation}
given the identity (\ref{eq:axy}).

We refer to $U_{xy}$ as the parallell transport from $y$ to $x$. We may notice:
\begin{equation}\label{eq:ubar}
U_{yx} = U_{xy}^\conj.
\end{equation}
We also use the convention:
\begin{equation}\label{eq:uid}
U_{xx} = 1.
\end{equation}

Discrete gauge transformations are defined as follows. We assume we have a value $\alpha_x\in \bbR$ associated with each vertex $x\in \calT^0$.
It acts on $u$ defined at vertices $x$, with vertex values $u_x$, by the transformation:

\begin{equation}
u_x \mapsto \exp(i \alpha_x) u_x.
\end{equation}

We will always consider that gauge potentials are in $\WF^1(\calT)$ and that gauge transformations are defined by functions $\alpha \in \WF^0(\calT)$, so that $\grad \alpha \in \WF^1(\calT)$.

However the wave functions $u$ will be reconstructed from their vertex values, in several different ways, depending on a constant gauge potential $\tilde A$ on each simplex $T$ (of every dimension). These gauge potentials can in principle be chosen independently of each other. However, in practice, on any simplex $T$, the associated $\tilde A$ will be the average of some globally compatible $A \in \WF^1(\calT) $. For instance, we may then write, when $T$ is a tetrahedron, for  $x  \in T$:
\begin{equation}
A(x) = \tilde A + (1/2) B \times (x - x_T),
\end{equation}
where $B = \curl A$ and $x_T$ is the isobarycentre of $T$.
 
We denote by $X_T[\tilde A]$ the space of complex valued functions $u$ on $T$ such that for every subsimplex $T'$, we have:
\begin{equation}
\Delta_{\tilde A} u = 0.
\end{equation}
 For instance $X_T[0]$ is simply the space of affine functions. Elements of $X_T[\tilde A ]$ are uniquely determined by their vertex values, so that we may consider the nodal interpolator onto $X_T[\tilde A]$.

Given regular meshes $\calT_h$ of size $h$, the associated space of scalar functions will be denoted $X_h[\tilde A]$ (note that $\tilde A$ depends on $h$).

Passing between $X_h[0]$ and $X_h[\tilde A]$ does not produce big errors. In fact we have the following estimates:
\begin{proposition} \label{prop:ltwopi}

Let $\Pi_h$ denote the nodal interpolator onto $X_h[0]$.
For $u \in X[\tilde A]$ we have estimates:
\begin{align}
\| u - \Pi_h u \|_{\rmL^2} &\cleq h^2 \| u\|_{\rmH^1},\label{eq:ltwoerr}\\
\| \nabla u - \nabla \Pi_h u \|_{\rmL^2} &\cleq h \| u\|_{\rmH^1}.
\end{align}
\end{proposition}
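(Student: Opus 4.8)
The plan is to prove both bounds one element at a time and then sum over the mesh. Fix a simplex $T\in\calT_h$ of diameter $\sim h$ and rescale it to a unit-diameter simplex $\hat T$ of the \emph{same shape} by a homothety $x = h\hat x + b$, writing $\hat u(\hat x) = u(h\hat x + b)$ and $\hat A = hA$ for the scaled (still constant) one-form. Because the map is a pure scaling, the covariant structure is preserved: $\nabla_A u = h^{-1}\nabla_{\hat A}\hat u$, so $\Delta_{\tilde A}u = 0$ on every subsimplex translates exactly into $\Delta_{\hat A}\hat u = 0$ on every subsimplex of $\hat T$, while $\|\hat A\|_{\rmL^\infty} = h\|A\|_{\rmL^\infty}\cleq h$ and $\hat A$ lands in a fixed compact set $\calA$. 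Since $\Pi_h$ is nodal interpolation onto affine functions and affine functions and vertex values are preserved under the homothety, $\widehat{\Pi_h u}$ is exactly the affine interpolant $\hat\Pi\hat u$ of $\hat u$ on $\hat T$. The remaining ingredients are the elementary scaling identities (with $n=\dim T$) $\|\hat u\|_{\rmL^2(\hat T)}\sim h^{-n/2}\|u\|_{\rmL^2(T)}$, $\|\nabla\hat u\|_{\rmL^2(\hat T)}\sim h^{1-n/2}\|\nabla u\|_{\rmL^2(T)}$, together with the analogous relations for $u - \Pi_h u$; shape-regularity of $\calT_h$ makes all constants uniform over the resulting compact family of unit reference simplices.

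Next I would extract a sharp bound on the second-order content of $\hat u$. Since $\hat A$ is constant, $\nabla\cdot\hat A = 0$ and $\Delta\hat u = -2i\hat A\cdot\nabla\hat u + |\hat A|^2\hat u\in\rmL^2(T')$ on each subsimplex $T'$, so Proposition~\ref{prop:bhhtwo} gives $\hat u\in\rmH^2_\rec(\hat T)$. Feeding $\Delta_{\hat A}\hat u = 0$ into Proposition~\ref{prop:htwobound} makes the Laplacian term drop out, leaving
\begin{equation*}
|\hat u|_{\rmH^2(\hat T)}\cleq \|\hat A\|_{\rmL^\infty}\,\|\nabla\hat u\|_{\rmL^2(\hat T)} + \|\hat A\|_{\rmL^\infty}^2\,\|\hat u\|_{\rmL^2(\hat T)}.
\end{equation*}
On the unit simplex the affine interpolation operator reproduces affine functions and is bounded from $\rmH^2(\hat T)$ into $\rmH^1(\hat T)$ (in dimension $\le 3$ vertex values are controlled by the $\rmH^2$ norm through Sobolev embedding), so the Bramble--Hilbert lemma applied to $\mathrm{Id}-\hat\Pi$, which annihilates affine functions, yields
\begin{equation*}
\|\hat u - \hat\Pi\hat u\|_{\rmL^2(\hat T)} + \|\nabla(\hat u - \hat\Pi\hat u)\|_{\rmL^2(\hat T)} \cleq |\hat u|_{\rmH^2(\hat T)}.
\end{equation*}

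Combining the two displays and inserting $\|\hat A\|_{\rmL^\infty}\cleq h$ gives $|\hat u|_{\rmH^2(\hat T)}\cleq h\,\|\nabla\hat u\|_{\rmL^2(\hat T)} + h^2\|\hat u\|_{\rmL^2(\hat T)}$, and substituting the scaling identities turns the right-hand side into $\cleq h^{2-n/2}\|u\|_{\rmH^1(T)}$. Reading off the two pieces of the Bramble--Hilbert estimate and scaling the left-hand sides back, namely $\|\hat u - \hat\Pi\hat u\|_{\rmL^2(\hat T)}\sim h^{-n/2}\|u - \Pi_h u\|_{\rmL^2(T)}$ and $\|\nabla(\hat u - \hat\Pi\hat u)\|_{\rmL^2(\hat T)}\sim h^{1-n/2}\|\nabla(u-\Pi_h u)\|_{\rmL^2(T)}$, cancels the factors $h^{-n/2}$ and $h^{1-n/2}$ respectively and produces the local bounds $\|u - \Pi_h u\|_{\rmL^2(T)}\cleq h^2\|u\|_{\rmH^1(T)}$ and $\|\nabla(u-\Pi_h u)\|_{\rmL^2(T)}\cleq h\|u\|_{\rmH^1(T)}$. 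Squaring, summing over all $T\in\calT_h$, and taking square roots gives the two claimed global estimates.

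The delicate point is routing the argument through the $\rmH^2$ regularity bound of Proposition~\ref{prop:htwobound} rather than the cheaper recursive $\rmH^1$ bound of Proposition~\ref{prop:honerec}. Both see that the error is driven entirely by $\tilde A$ (interpolation being exact when $\tilde A = 0$), but only the $\rmH^2$ route exposes the extra favorable power of $h$ hidden in the product $\|\hat A\|_{\rmL^\infty}\|\nabla\hat u\|_{\rmL^2}$: applying Proposition~\ref{prop:honerec} directly to $\hat u - \hat\Pi\hat u$, which has vanishing vertex values, would only deliver $\|u - \Pi_h u\|_{\rmL^2}\cleq h\|u\|_{\rmL^2}$, i.e. merely first order. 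Securing the second order in $\rmL^2$ is precisely where the careful bookkeeping of how $\|\nabla\hat u\|_{\rmL^2}$ scales against the small factor $\|\hat A\|_{\rmL^\infty}\sim h$ is indispensable.
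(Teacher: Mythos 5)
Your proposal is correct and follows essentially the same route as the paper: a Bramble--Hilbert argument reduces the interpolation error on a unit-diameter simplex to the $\rmH^2$ seminorm, Proposition~\ref{prop:htwobound} with $\Delta_{\tilde A}u=0$ converts that seminorm into $\|A\|_{\rmL^\infty}\|\nabla u\|_{\rmL^2}+\|A\|_{\rmL^\infty}^2\|u\|_{\rmL^2}$, and scaling delivers the factors $h^2$ and $h$. You simply make explicit the scaling bookkeeping and the $\rmH^2_\rec$ membership (via Proposition~\ref{prop:bhhtwo}) that the paper leaves implicit.
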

\begin{proof}
Consider a simplex $T$ of diameter $1$.
We write, using a Bramble-Hilbert type argument, followed by Proposition \ref{prop:htwobound}:
\begin{align}
\| u - \Pi_h u \|_{\rmL^2(T)} & \cleq \sum_{T' \subcellp T} | u|_{\rmH^2(T')},\\
& \cleq \| A\|_{\rmL^\infty} \| \nabla u\|_{\rmL^2(T)} + \| A\|_{\rmL^\infty}^2 \| u\|_{\rmL^2(T)}.
\end{align}
Then estimate (\ref{eq:ltwoerr}) follows by scaling.

The other estimate is proved similarly.
\end{proof}
In the other direction we note:
\begin{proposition} \label{prop:ltwopit}

Let $\tilde \Pi_h$ denote the nodal interpolator onto $X_h[\tilde A]$.
For $u \in X_h[0]$ we have estimates:
\begin{align}
\| u - \tilde \Pi_h u \|_{\rmL^2} &\cleq h^2 \| u\|_{\rmH^1},\label{eq:ltwoerrt}\\
\| \nabla u - \nabla \tilde \Pi_h u \|_{\rmL^2} &\cleq h \| u\|_{\rmH^1}.
\end{align}
\end{proposition}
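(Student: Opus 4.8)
The plan is to mirror the proof of Proposition \ref{prop:ltwopi}, working first on a reference simplex $T$ of diameter $1$ and concluding by scaling. Set $w = u - \tilde\Pi_h u$. Since $\tilde\Pi_h u$ interpolates $u$ at every vertex, $w$ vanishes at all vertices of $T$ and of each of its subsimplices. Moreover, on each subsimplex $T' \subcellp T$ we have $\Delta_{\tilde A} w = \Delta_{\tilde A} u$, because $\tilde\Pi_h u$ lies in $X_{T'}[\tilde A]$ and is therefore covariant-harmonic there. As $u$ is affine and $\tilde A$ is constant, the expansion of the covariant Laplacian collapses: $\Delta u = 0$ and $\nabla\cdot\tilde A = 0$, so on each $T'$ one has $\Delta_{\tilde A} u = 2i\,\tilde A\cdot\nabla u - |\tilde A|^2 u$, whence the pointwise bound $\|\Delta_{\tilde A} u\|_{\rmL^2(T')}\cleq \|\tilde A\|_{\rmL^\infty}\|\nabla u\|_{\rmL^2(T')} + \|\tilde A\|_{\rmL^\infty}^2\|u\|_{\rmL^2(T')}$.

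Next I would apply Proposition \ref{prop:honerec} to $w$. Since the vertex values of $w$ vanish, it yields $\sum_{T'\subcellp T}\|w\|_{\rmH^1(T')}\cleq \sum_{T'\subcellp T}\|\Delta_{\tilde A} w\|_{\rmH^{-1}(T')}$. Using $\|\cdot\|_{\rmH^{-1}(T')}\cleq\|\cdot\|_{\rmL^2(T')}$ together with the bound above, the right-hand side is controlled by $\|\tilde A\|_{\rmL^\infty}\sum_{T'\subcellp T}\|\nabla u\|_{\rmL^2(T')} + \|\tilde A\|_{\rmL^\infty}^2\sum_{T'\subcellp T}\|u\|_{\rmL^2(T')}$. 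Because $u$ is affine, $\nabla u$ is constant and its restriction to any face has $\rmL^2$ norm comparable, on the fixed reference simplex, to $\|\nabla u\|_{\rmL^2(T)}$; likewise the restriction of the affine function $u$ to any face has $\rmL^2$ norm bounded by $\|u\|_{\rmL^2(T)}$, by equivalence of norms on the finite-dimensional space of affine functions. Hence $\sum_{T'\subcellp T}\|w\|_{\rmH^1(T')}\cleq \|\tilde A\|_{\rmL^\infty}\|\nabla u\|_{\rmL^2(T)} + \|\tilde A\|_{\rmL^\infty}^2\|u\|_{\rmL^2(T)}$. Since the left-hand sum contains the term $T' = T$, this simultaneously bounds $\|w\|_{\rmL^2(T)}$ and $\|\nabla w\|_{\rmL^2(T)}$.

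Finally, the reference-element estimate has exactly the same shape as the one obtained inside the proof of Proposition \ref{prop:ltwopi}: the first-order contribution carries a single factor of $\|\tilde A\|_{\rmL^\infty}$ and the zeroth-order contribution carries two. Under the scaling to a physical simplex of diameter $h$, the pullback of a bounded $A$ satisfies $\|\tilde A\|_{\rmL^\infty}\cleq h$, and the mismatch between the scaling of $\|\nabla u\|_{\rmL^2}$ and that of $\|u\|_{\rmL^2}$ is compensated precisely by this difference in powers of $\|\tilde A\|_{\rmL^\infty}$; one then reads off $\|u - \tilde\Pi_h u\|_{\rmL^2}\cleq h^2\|u\|_{\rmH^1}$ and $\|\nabla u - \nabla\tilde\Pi_h u\|_{\rmL^2}\cleq h\|u\|_{\rmH^1}$. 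The one delicate point — and the only place a power of $h$ could be lost — is to keep the two terms separate so that the zeroth-order contribution retains its $\|\tilde A\|_{\rmL^\infty}^2$ factor; collapsing the bound into a single $\|\tilde A\|_{\rmL^\infty}\|u\|_{\rmH^1}$ would degrade the $\rmL^2$ rate to first order.
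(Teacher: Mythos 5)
Your proposal is correct and follows essentially the same route as the paper's own proof: apply Proposition \ref{prop:honerec} to $u-\tilde\Pi_h u$ (whose vertex values vanish and whose covariant Laplacian on each subsimplex equals $\Delta_{\tilde A}u = 2i\tilde A\cdot\nabla u - |\tilde A|^2 u$ since $u$ is affine), bound the face norms by norms on $T$, and conclude by scaling. You merely spell out the intermediate steps (vanishing vertex values, the $\rmH^{-1}\leq\rmL^2$ bound, and the bookkeeping of powers of $\|\tilde A\|_{\rmL^\infty}$) that the paper leaves implicit.
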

\begin{proof}
Consider a simplex $T$ of diameter $1$.
We apply Proposition \ref{prop:honerec}:
\begin{align}
\| u - \tilde \Pi u \|_{\rmL^2(T)} & \cleq \sum_{T' \subcellp T} \| \Delta_{\tilde A} u\|_{\rmL^2(T')},\\
& \cleq  \sum_{T' \subcell T} \| A\|_{\rmL^\infty} \| \nabla u\|_{\rmL^2(T')} + \| A\|_{\rmL^\infty}^2 \| u\|_{\rmL^2(T')},\\
& \cleq \| A\|_{\rmL^\infty} \| \nabla u\|_{\rmL^2(T)} + \| A\|_{\rmL^\infty}^2 \| u\|_{\rmL^2(T)}.
\end{align}
Then (\ref{eq:ltwoerrt}) follows by scaling.

The other estimate is proved similarly.
\end{proof}

\begin{remark}\label{rem:bestt}
From this last Proposition estimates of best approximation in $X_h[\tilde A]$ can be deduced from those in $X_h[0]$, which are well known.
\end{remark}

\subsection{Covariant mass matrix}

We now wish to define $\rmL^2$ scalar products of functions, given their vertex values.  
 
For a tetrahedron $T$, we denote by $M(T)$ the mass matrix on $\WF^0(T)$ with respect to the canonical basis:
\begin{equation}\label{eq:mxy}
M_{xy}(T) = \int_T \lambda_x \lambda_{y}.
\end{equation}
This matrix is real, symmetric and positive definite. The global mass matrix, which may be assembled from the local mass matrices defined above, will be denoted $M$, so that:
\begin{equation}\label{eq:mm}
M_{xy} = \int \lambda_x \lambda_{y}.
\end{equation}

\begin{definition}\label{def:massm}
Given parallel transports $U$ between neighbouring vertices (that is, those connected by an edge), subject to (\ref{eq:ubar}) and (\ref{eq:uid}), we define a \emph{covariant $\rmL^2$-product}, as follows. Given complex scalar fields $u$ and $v$, with well defined vertex values, we set:
\begin{equation}\label{eq:covscal}
\langle u, v \rangle_U = \sum_{xy} u_x^\conj M_{xy} U_{xy}v_y,
\end{equation}
where the matrix $M$ is the mass matrix already defined in (\ref{eq:mm}).
\end{definition}

This matrix may also be assembled from terms local to each tetrahedron.

\begin{proposition}\label{prop:herminv}
This covariant scalar product (\ref{eq:covscal}) is hermitian and gauge invariant, under transformations :
\begin{align}
u_x & \mapsto \exp(i \alpha_x) u_x,\\
U_{xy} & \mapsto  \exp(i \alpha_x)  U_{xy} \exp(- i \alpha_y),\\
v_y & \mapsto \exp(i \alpha_y) v_y.
\end{align}

\end{proposition}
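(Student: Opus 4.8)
The statement has two independent parts, each reducing to a direct algebraic manipulation of the defining sum (\ref{eq:covscal}); I would treat them separately.

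For the hermitian property I would compute $\overline{\langle v, u\rangle_U}$ and show that it equals $\langle u, v\rangle_U$. Starting from $\langle v,u\rangle_U = \sum_{xy} v_x^\conj M_{xy} U_{xy} u_y$ and taking complex conjugates, I would use that each $M_{xy}$ is real to move the conjugation onto the $U$-factor, obtaining $\sum_{xy} u_y^\conj M_{xy} U_{xy}^\conj v_x$. The two ingredients that close the argument are the symmetry $M_{xy}=M_{yx}$ of the mass matrix and the conjugation identity (\ref{eq:ubar}), namely $U_{xy}^\conj = U_{yx}$. Applying the latter and then relabelling the summation indices $x \leftrightarrow y$ turns the expression into $\sum_{xy} u_x^\conj M_{yx} U_{xy} v_y$, whereupon symmetry of $M$ gives exactly $\langle u, v\rangle_U$.

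For gauge invariance I would substitute the three transformation rules directly into (\ref{eq:covscal}). Since the $\alpha_x$ are real, conjugation sends $\exp(i\alpha_x)$ to $\exp(-i\alpha_x)$, so the transformed summand carries a factor $\exp(-i\alpha_x)$ from $u_x^\conj$, a factor $\exp(i\alpha_x)\exp(-i\alpha_y)$ from $U_{xy}$, and a factor $\exp(i\alpha_y)$ from $v_y$. Because these phases are complex scalars they commute freely past $M_{xy}$ and $U_{xy}$, and the four factors cancel pairwise (the $\alpha_x$-phases against each other, and the $\alpha_y$-phases against each other), leaving the original summand. Summing over $x,y$ recovers $\langle u, v\rangle_U$.

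Neither part presents a genuine obstacle: the computation is elementary. The only point requiring a little care is the hermitian direction, where one must invoke both the symmetry of $M$ and the identity (\ref{eq:ubar}) together and relabel the indices correctly; forgetting the relabelling, or conflating $M_{xy}$ with $M_{yx}$, is the natural place to slip. For the invariance part there is essentially nothing to overcome, since the transformation rule for $U$ was designed precisely so that the vertex phases cancel, making the verification immediate once the substitution is written out.
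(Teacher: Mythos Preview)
Your proposal is correct and follows essentially the same approach as the paper: the hermitian property is verified using the realness and symmetry of $M$, the identity (\ref{eq:ubar}), and an index swap, while gauge invariance is dismissed as a trivial cancellation of phases. The only cosmetic difference is that the paper shows $\langle v,u\rangle_U = \langle u,v\rangle_U^\conj$ directly rather than conjugating first, but the ingredients and logic are identical.
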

\begin{proof}
(i) Hermitian:
\begin{align}
\langle v, u \rangle_U & = \sum_{xy} v_x^\conj M_{xy} U_{xy}u_y,\\
& = \sum_{xy}  u_y M_{yx} U_{yx}^\conj v_x^\conj,\\
& = \langle u, v \rangle_U^\conj .
\end{align}

(ii) Gauge invariance is trivial.
\end{proof}

\begin{proposition}\label{prop:covhtwo}
The covariant scalar product (\ref{eq:covscal}) is $h^2$-conforming with respect to the $\rmH^1$ norm on the space $X_h[0]$ associated with a regular mesh of width $h$. In other words, for $u,v \in X_h[0]$ we have:
\begin{equation}
|\langle u, v \rangle - \langle u, v \rangle_U | \cleq h^2 \| u \|_{\rmH^1} \| v \|_{\rmH^1}.
\end{equation}

\end{proposition}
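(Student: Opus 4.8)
The plan is to compare the two bilinear forms directly on vertex values. Since every $u \in X_h[0]$ is affine with $u = \sum_x u_x \lambda_x$, the exact product is $\langle u, v\rangle = \int u^\conj v = \sum_{xy} u_x^\conj M_{xy} v_y$, with exactly the mass matrix $M$ that appears in (\ref{eq:covscal}). Hence the error has the clean form
\[
\langle u, v\rangle - \langle u, v\rangle_U = \sum_{xy} u_x^\conj M_{xy}(1 - U_{xy}) v_y,
\]
and since $M$ assembles from the local matrices $M_{xy}(T)$ while $U_{xy}$ depends only on the edge $[xy]$, I would localize this as a sum over tetrahedra $T$ of contributions involving only the four vertices of $T$. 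Throughout I would use that on each edge $|A_{xy}| = |\int_{[xy]}A| \cleq h$, because $A$ is bounded and the edge has length of order $h$.

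Next I would split $1 - U_{xy} = 1 - \exp(i A_{xy}) = (1 - \cos A_{xy}) - i \sin A_{xy}$ and isolate the first-order part, writing
\[
1 - U_{xy} = -i A_{xy} + r_{xy}, \qquad |r_{xy}| = \bigl|(1-\cos A_{xy}) - i(\sin A_{xy} - A_{xy})\bigr| \cleq A_{xy}^2 \cleq h^2 .
\]
The remainder $\sum_{xy} u_x^\conj M_{xy} r_{xy} v_y$ is harmless: using $|r_{xy}|\cleq h^2$, the nonnegativity of the entries $M_{xy} = \int\lambda_x\lambda_y \ge 0$, and the Cauchy--Schwarz inequality for the nonnegative kernel $M$, it is bounded by $h^2\,(\sum_{xy}|u_x|M_{xy}|u_y|)^{1/2}(\sum_{xy}|v_x|M_{xy}|v_y|)^{1/2}\cleq h^2\|u\|_{\rmL^2}\|v\|_{\rmL^2}$, the last step by the standard equivalence of $\rmL^2$ and coefficient norms on $X_h[0]$.

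The crux is the first-order term $F = -i\sum_{xy} u_x^\conj M_{xy} A_{xy} v_y$, which a naive bound (just $|A_{xy}|\cleq h$) would only control by $h\|u\|_{\rmL^2}\|v\|_{\rmL^2}$, i.e.\ at first order only. The extra power of $h$ must come from a cancellation, and stating this step cleanly is what I expect to be the main obstacle. I would exploit the symmetry $M_{xy}=M_{yx}$ together with the antisymmetry $A_{xy}=-A_{yx}$: relabelling $x\leftrightarrow y$ shows that the part of $u_x^\conj v_y$ symmetric under this swap contributes zero, leaving
\[
F = -\tfrac{i}{2}\sum_{xy}(u_x^\conj v_y - u_y^\conj v_x)\,M_{xy}A_{xy}.
\]
Then I would write $u_x^\conj v_y - u_y^\conj v_x = u_x^\conj(v_y - v_x) - (u_y - u_x)^\conj v_x$ and use that $u,v$ are affine, so that $|u_y - u_x|\cleq h\,|\nabla u|$ and $|v_y - v_x|\cleq h\,|\nabla v|$ on $T$; each summand now carries one factor $h$ from the nodal difference and one from $A_{xy}$.

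Per tetrahedron this yields a bound of the shape $h^2\bigl(\|u\|_{\rmL^2(T)}\|\nabla v\|_{\rmL^2(T)} + \|\nabla u\|_{\rmL^2(T)}\|v\|_{\rmL^2(T)}\bigr)$, after converting pointwise nodal values and the (constant) gradients into $\rmL^2$ norms by the scaling and inverse estimates valid on a shape-regular mesh. Summing over $T$ and applying the discrete Cauchy--Schwarz inequality gives $|F|\cleq h^2\|u\|_{\rmH^1}\|v\|_{\rmH^1}$, and combining this with the remainder estimate completes the proof. The upshot is that the lattice-gauge modification of the mass matrix is a second-order consistent variational crime precisely because its genuinely first-order (imaginary) part is antisymmetric and therefore sees only the discrete gradient of the data.
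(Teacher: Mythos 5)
Your proof is correct and rests on the same key cancellation as the paper's: the symmetrization in $x \leftrightarrow y$, exploiting $M_{xy}=M_{yx}$ together with $U_{yx}=U_{xy}^\conj$ (equivalently $A_{yx}=-A_{xy}$), so that the genuinely first-order imaginary part of $1-U_{xy}$ only ever pairs with nodal differences of the affine data, each worth an extra factor $h$. The paper organizes the identical bookkeeping by decomposing $u$ and $v$ into constant plus gradient parts about the barycenter rather than Taylor-expanding $U_{xy}$, but the two computations are term-for-term equivalent.
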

\begin{proof}
We work on a simplex $T$ of diameter $h$.

We want to estimate the error:
\begin{equation}
\langle u, v \rangle - \langle u, v\rangle_U,
\end{equation}
And we do this by decomposing $u$ and $v$ according to:
\begin{align}
u(x) & = u_T + u_T^\prime \cdot (x - x_T), \mywith u_T^\prime = \grad u, \\
v(x) & = v_T + v_T^\prime \cdot (x - x_T), \mywith u_T^\prime = \grad v,
\end{align} 
where $u_T$ denotes the value of $u$ at the isobarycenter of $T$, which is denoted $x_T$, and $u_T^\prime$ is the gradient of $u$. Likewise for $v$.

We now treat the four terms this decomposition gives:

(i) We have:
\begin{align}
\langle u_T, v_T \rangle - \langle u_T, v_T \rangle_U & = \sum_{xy} u_T^\conj M_{xy} (U_{xy} -1) v_T,\\
& = \sum_{xy} u_T^\conj M_{xy} (\cos (A_{xy})  -1) v_T,\\
& \cleq  h^2 \|  u \|_{\rmL^2(T)}   \|  u \|_{\rmL^2(T)}.   
\end{align}
The trick was to symmetrize in $x$ and $y$, use (\ref{eq:ubar}) and $|\cos (A_{xy})  -1| \cleq h^2$.

(ii) We have:
\begin{align}
 & \langle u_T, v_T^\prime \cdot (y - y_T) \rangle - \langle u_T, v_T^\prime \cdot (y - y_T) \rangle_U,\\
= &\sum_{xy} u_T^\conj M_{xy} (U_{xy} -1) ( v_T^\prime \cdot (y - y_T)),\\
\cleq & h^2   \|  u \|_{\rmL^2(T)}   \|  \grad v \|_{\rmL^2(T)}.
\end{align}
Here we combined $|U_{xy} -1| \cleq h$ and $|y-y_T| \leq h$.

(iii) There is a similar term where $u$ and $v$ exchange roles.

(iv) Finally, the last term, involving two gradients, yields a factor $h^3$.

This completes the proof.
\end{proof}

\begin{proposition} \label{prop:covscalt}
The covariant scalar product (\ref{eq:covscal}) is $h^2$-conforming with respect to the $\rmH^1$ norm on the space $X_h[\tilde A]$ associated with a regular mesh of width $h$. In other words, for $u,v \in X_h[\tilde A]$:
\begin{equation}
|\langle u, v \rangle - \langle u, v \rangle_U | \cleq h^2 \| u \|_{\rmH^1} \| v \|_{\rmH^1}.
\end{equation}
\end{proposition}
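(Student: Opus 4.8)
The plan is to transfer the already-established estimate of Proposition \ref{prop:covhtwo}, which is exactly this bound on the space $X_h[0]$, over to $X_h[\tilde A]$ by means of the nodal interpolator $\Pi_h$ onto $X_h[0]$. The crucial observation is that the covariant scalar product $\langle \cdot, \cdot \rangle_U$ of (\ref{eq:covscal}) depends on its arguments \emph{only through their vertex values}. Since $\Pi_h$ is the nodal interpolator, $u$ and $\Pi_h u$ carry the same vertex values for every $u \in X_h[\tilde A]$, and therefore
\[
\langle u, v \rangle_U = \langle \Pi_h u, \Pi_h v \rangle_U \quad \text{for all } u, v \in X_h[\tilde A].
\]

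With this identity I would split the error by inserting the interpolants:
\begin{align*}
\langle u, v \rangle - \langle u, v \rangle_U = & \ \big( \langle u, v \rangle - \langle \Pi_h u, \Pi_h v \rangle \big) \\
& + \big( \langle \Pi_h u, \Pi_h v \rangle - \langle \Pi_h u, \Pi_h v \rangle_U \big).
\end{align*}
The second bracket involves only functions of $X_h[0]$, so Proposition \ref{prop:covhtwo} applies directly and bounds it by $\cleq h^2 \| \Pi_h u \|_{\rmH^1} \| \Pi_h v \|_{\rmH^1}$. The $\rmH^1$-stability $\| \Pi_h u \|_{\rmH^1} \cleq \| u \|_{\rmH^1}$ (for $h$ bounded) is an immediate consequence of the two estimates of Proposition \ref{prop:ltwopi} via the triangle inequality, so this term is $\cleq h^2 \| u \|_{\rmH^1} \| v \|_{\rmH^1}$, as required.

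The first bracket is a difference of genuine $\rmL^2$ scalar products, which I would split once more as
\[
\langle u, v \rangle - \langle \Pi_h u, \Pi_h v \rangle = \langle u - \Pi_h u, v \rangle + \langle \Pi_h u, v - \Pi_h v \rangle,
\]
and bound each summand by Cauchy--Schwarz together with the $\rmL^2$ estimate $\| u - \Pi_h u \|_{\rmL^2} \cleq h^2 \| u \|_{\rmH^1}$ of Proposition \ref{prop:ltwopi} (and its analogue for $v$), using again $\| \Pi_h u \|_{\rmL^2} \cleq \| u \|_{\rmH^1}$. Both terms are then $\cleq h^2 \| u \|_{\rmH^1} \| v \|_{\rmH^1}$, and adding the two brackets finishes the argument.

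I do not anticipate a serious obstacle: the whole proof is a reduction to the flat case $\tilde A = 0$. The one point deserving care is the opening identity --- that $\langle \cdot, \cdot \rangle_U$ sees only vertex values --- because it is precisely what permits replacing $u, v \in X_h[\tilde A]$ by their piecewise affine interpolants without altering the covariant product at all, thereby decoupling the ``variational crime'' estimate (handled by Proposition \ref{prop:covhtwo}) from the change-of-space estimate (handled by Proposition \ref{prop:ltwopi}). Everything beyond that is the triangle inequality and Cauchy--Schwarz.
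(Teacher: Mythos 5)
Your proposal is correct and follows essentially the same route as the paper's proof: insert the nodal interpolant $\Pi_h$ onto $X_h[0]$, use the fact that the covariant product sees only vertex values, apply Proposition \ref{prop:covhtwo} to the interpolated pair, and control the remaining $\rmL^2$-product difference and the $\rmH^1$-stability of $\Pi_h$ via Proposition \ref{prop:ltwopi}. You merely make explicit two steps the paper leaves implicit (the identity $\langle u,v\rangle_U = \langle \Pi_h u, \Pi_h v\rangle_U$ and the Cauchy--Schwarz splitting of the first bracket), which is a welcome clarification rather than a deviation.
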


\begin{proof}
We let $\Pi_h$ denote the nodal interpolator onto $X_h[0]$. 
We have, by Proposition \ref{prop:ltwopi}:
\begin{equation}
|\langle u, v \rangle - \langle \Pi_h u, \Pi_h v \rangle | \cleq h^2 \| u \|_{\rmH^1} \| v \|_{\rmH^1},
\end{equation}
and, by Proposition \ref{prop:covhtwo}:
\begin{equation}
|\langle \Pi_h u, \Pi_h v \rangle - \langle \Pi_h u, \Pi_h v \rangle_U | \cleq h^2 \| \Pi_h u \|_{\rmH^1} \| \Pi_h v \|_{\rmH^1}.
\end{equation}
We conclude by the stability of $\Pi_h: X_h[\tilde A] \to X_h[0]$ in $\rmH^1$ norm, which can be deduced from Proposition \ref{prop:ltwopi}.
\end{proof}

\subsection{Covariant stiffness matrix}

\begin{lemma}\label{lem:nabexp}

La $A$ be a constant one-form on the edge $[xy]$. We consider functions $u:[xy] \to \bbC$ satisfying $\Delta_A u = 0$. Then:
\begin{align}
(\nabla_A u (x)) (y-x) & = u(y) \exp(i A_{xy}) - u(x),\label{eq:naux}\\
(\nabla_A u (y)) (y-x) & = u(y) - \exp(- i A_{xy}) u(x).\label{eq:nauy}
\end{align}

\end{lemma}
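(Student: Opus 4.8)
The plan is to solve the ODE $\Delta_A u = 0$ explicitly on the edge $[xy]$ and then read off the covariant derivative at the endpoints. Since $A$ is constant on $[xy]$ and we are working on a one-dimensional domain, $\Delta_A u = \nabla_A^\star \nabla_A u$ times a sign becomes, after parametrizing the edge, a constant-coefficient linear ODE. First I would introduce an affine parameter $t \in [0,1]$ along the edge, writing $x(t) = x + t(y-x)$, and let $a$ denote the scalar $A \cdot (y-x) = A_{xy}$ (recalling the identity \eqref{eq:axy}). Under this parametrization the covariant derivative $\nabla_A u$ contracted against the edge direction $(y-x)$ becomes $u'(t) + i a\, u(t)$, and the equation $\Delta_A u = 0$ reduces to $u''(t) = 0$ for the gauge-transformed unknown, so that the general solution of $\nabla_A u \cdot (y-x) = \mathrm{const}$ is of the form $u(t) = \exp(-iat)(c_0 + c_1 t)$ for constants $c_0, c_1$.

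Next I would impose the boundary values $u(0) = u(x)$ and $u(1) = u(y)$ to pin down $c_0$ and $c_1$, and then compute the quantity $(\nabla_A u \cdot (y-x))(t) = u'(t) + i a\, u(t)$, which one checks is constant in $t$ along the solution. Evaluating this constant at $t=0$ gives, after the algebra, $c_1 = u(y)\exp(ia) - u(x)$ using $\exp(-ia\cdot 1) = \exp(-iA_{xy})$; this yields \eqref{eq:naux} directly since at $t=0$ the factor $\exp(-iat)$ equals $1$. Evaluating the same constant using the representation anchored at $t=1$, i.e. rewriting the solution as $\exp(-ia(t-1))(\tilde c_0 + \tilde c_1(t-1))$ and matching endpoint values, gives \eqref{eq:nauy}. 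Alternatively, and perhaps more cleanly, I would simply substitute the explicit $c_0, c_1$ into $u'(t) + ia\,u(t)$ and evaluate at $t=0$ and $t=1$ respectively, being careful that $(\nabla_A u(x))(y-x)$ in the statement denotes this contracted derivative.

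There is essentially no hard analytic obstacle here; the statement is a closed-form computation for a scalar ODE with constant coefficients. The one point requiring care is the bookkeeping of the factor $\exp(\pm i A_{xy})$ and the sign/orientation conventions: one must track that $U_{xy} = \exp(iA_{xy})$ is parallel transport from $y$ to $x$ as fixed earlier, and verify that the asymmetry between \eqref{eq:naux} and \eqref{eq:nauy} is exactly the expected one, namely that the two formulas are interchanged under $x \leftrightarrow y$ together with $A_{xy} \mapsto A_{yx} = -A_{xy}$, consistent with \eqref{eq:ubar}. I would close by noting this symmetry as a consistency check rather than reproving the second identity from scratch.
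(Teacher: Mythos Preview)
Your approach is essentially identical to the paper's: parametrize the edge by $t\in[0,1]$, write the general solution of $\Delta_A u = 0$ as $u(t) = \exp(-iA_{xy}t)(c_0 + c_1 t)$, solve for $c_0,c_1$ from the endpoint values, and evaluate $u'(t)+iA_{xy}\,u(t)$ at $t=0$ and $t=1$. The paper does exactly this (with $a,b$ in place of your $c_0,c_1$).

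One small slip: you assert that $(\nabla_A u)(y-x) = u'(t)+iA_{xy}\,u(t)$ is \emph{constant} in $t$ along the solution. It is not; a direct computation gives $u'(t)+iA_{xy}\,u(t) = \exp(-iA_{xy}t)\,c_1$, which carries the phase factor. This is precisely why \eqref{eq:naux} and \eqref{eq:nauy} differ by the placement of $\exp(\pm iA_{xy})$. Your ``alternative'' route of simply evaluating at $t=0$ and $t=1$ is correct and is what the paper does, so the error is only in that one sentence, not in the argument itself.
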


\begin{proof}
We parametrize the edge linearly from $0$ to $1$, with a variable $t$. The solutions  to $\Delta_A u = 0$ have the form:
\begin{equation}
u(t) = (a + b t) \exp(- i A_{xy} t).
\end{equation}
We get: 
\begin{align}
u(0) & = a,\\
u(1) & = (a + b ) \exp(-i A_{xy}).
\end{align}
We also compute:
\begin{equation}
(\nabla + i A)  u (t) = \exp(- i A_{xy} t) b.
\end{equation}
We deduce:
\begin{align}
(\nabla + i A) u (0) &= b,\\
& = u(1) \exp(i A_{xy}) - u(0).
\end{align}
and likewise:
\begin{align}
(\nabla + i A) u (1) &= \exp(- i A_{xy}) b,\\
& = u(1) - \exp(- i A_{xy}) u(0).
\end{align}
From this the lemma follows.
\end{proof}

We use these identities as follows. Given a mesh and a Whitney one-form $A$ on it, we define, on every simplex, $\tilde A$, to be the average of $A$. We remark that on edges $\tilde A = A$. Associated with $\tilde A$ we have the space $X[\tilde A]$ of scalar functions, on the mesh,  determined by imposing $\Delta_{\tilde A} u =0 $  on every simplex (of every dimension). Then for $u \in X[\tilde A]$, the vertex values of $\nabla_A u$ are computable, using (\ref{eq:naux}, \ref{eq:nauy}). Thus we may interpolate it onto affine vector fields in a computable way. We now detail how this leads to a numerical method.

Let $T$ be a tetrahedron. We consider, at each vertex $x$ and for each edge $[xy]$ emanating from $x$, the tangent vector $\tau_{xy}= y-x$. At any vertex $x$, the three tangent vectors, pointing to the three other vertices of the tetrahedron, constitute a basis of $\bbR^3$. We denote by $\mu_{xy}$ the dual basis, i.e.:
\begin{equation}
\mu_{xz}(\tau_{xy})  = \begin{cases}
1 & \myif y=z,\\
0 & \myif y\neq z.
\end{cases}
\end{equation}

Given nodal values for a scalar function $u$ on $T$, we construct an affine one-form $v$ by setting for each pair of vertices $(x, y)$:
\begin{equation}\label{eq:vecdof}
v_{xy} = v(x) (\tau_{xy}) = U_{xy} u(y) - u(x).
\end{equation}
and, summing over pairs of vertices we define the vectorfield:
\begin{equation}
v = \sum_{xy} v_{xy}  \lambda_x \mu_{xy}.
\end{equation}
By Lemma \ref{lem:nabexp}, if $u\in X[\tilde A]$,  $v$ is then the affine vectorfield on $T$ coinciding with $\nabla_A u$ at the vertices.

The $\rmL^2$ scalar product on affine one forms (or vector fields)  can be expressed with the scalar mass matrix, as follows. Let $v$ and $v'$ be affine one forms. Define the numbers $v_{xy}$ by (\ref{eq:vecdof}), and proceed similarly for and $v'$. 
\begin{equation}
\int v^\conj \cdot v' = \sum_{xy, zt} v_{xy}^\conj v'_{zt} \ \mu_{xy} \cdot \mu_{zt} M_{xz}.
\end{equation}
The covariant scalar product on affine one forms is defined by setting:
\begin{equation}
\langle v,  v' \rangle_U = \sum_{xy, zt} v_{xy}^\conj U_{xz} v'_{zt} \ \mu_{xy} \cdot \mu_{zt} M_{xz}.
\end{equation}

\begin{definition}\label{def:stiffm}
Let $u$ and $v$ be functions with well defined vertex values. We define a modified bilinear form, as a sum over tetrahedra:
\begin{equation}
\tilde a(u,v) = \sum_T \tilde a_T(u,v),
\end{equation}
where the contribution of tetrahedron $T$ is:
\begin{equation} \label{eq:stiffcontt}
\tilde a_T(u,v) = \sum_{xy, zt} (U_{xy}u_y -u_x ) ^\conj U_{xz} (U_{zt} u_t - u_z)  \mu_{xy} \cdot \mu_{zt} M_{xz}(T).
\end{equation}
\end{definition}
A crucial identity, which follows from Lemma \ref{lem:nabexp}, is that for $u,v \in X[\tilde A]$:
\begin{equation}\label{eq:stiffdisc}
\tilde a(u,v) = \langle \Pi \nabla_A u,  \Pi \nabla_A v \rangle_U. 
\end{equation}
The interpolation operator $\Pi$  appearing here, is the nodal interpolation onto affine one forms.

\begin{proposition} 
The discrete stiffness matrix defined by (\ref{eq:stiffcontt}) is gauge invariant (in the same sense as in Proposition \ref{prop:herminv}) and hermitian.
\end{proposition}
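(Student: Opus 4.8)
The plan is to treat the two assertions separately; both follow from the structure of the elementary building block $U_{xy}u_y - u_x$ together with the symmetry relation (\ref{eq:ubar}) and the reality and symmetry of $M_{xz}(T)$ and of the dot products $\mu_{xy}\cdot\mu_{zt}$. Throughout I write $w_{xy} = U_{xy}u_y - u_x$ and $w'_{xy} = U_{xy}v_y - v_x$, so that (\ref{eq:stiffcontt}) reads $\tilde a_T(u,v) = \sum_{xy,zt} w_{xy}^\conj\, U_{xz}\, w'_{zt}\,(\mu_{xy}\cdot\mu_{zt})\,M_{xz}(T)$, in accordance with (\ref{eq:stiffdisc}).

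For gauge invariance, the first step is to observe that $w_{xy}$ transforms covariantly like a nodal value attached to the vertex $x$. Indeed, under the transformations of Proposition \ref{prop:herminv} we have $u_y \mapsto \exp(i\alpha_y)u_y$ and $U_{xy}\mapsto \exp(i\alpha_x)U_{xy}\exp(-i\alpha_y)$, whence $U_{xy}u_y \mapsto \exp(i\alpha_x)U_{xy}u_y$ while $u_x \mapsto \exp(i\alpha_x)u_x$, so that $w_{xy}\mapsto \exp(i\alpha_x)w_{xy}$ and likewise $w'_{zt}\mapsto \exp(i\alpha_z)w'_{zt}$. The generic summand then acquires the factors $\exp(-i\alpha_x)$ from $w_{xy}^\conj$, $\exp(i\alpha_x)\exp(-i\alpha_z)$ from $U_{xz}$, and $\exp(i\alpha_z)$ from $w'_{zt}$; these cancel, and $\tilde a_T$ is unchanged. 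In effect this is exactly the cancellation already exploited in Proposition \ref{prop:herminv}, once one notices that the pairs $(w_{xy})_x$ and $(w'_{zt})_z$ play the role of gauge-covariant nodal data.

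For the Hermitian property the plan is to verify $\tilde a_T(v,u) = \overline{\tilde a_T(u,v)}$ directly. Since $M_{xz}(T)$ and $\mu_{xy}\cdot\mu_{zt}$ are real, conjugating the defining sum gives $\overline{\tilde a_T(u,v)} = \sum_{xy,zt} w_{xy}\,\overline{U_{xz}}\,(w'_{zt})^\conj\,(\mu_{xy}\cdot\mu_{zt})\,M_{xz}(T)$. Now apply (\ref{eq:ubar}) in the form $\overline{U_{xz}} = U_{zx}$, together with $M_{xz}(T)=M_{zx}(T)$ and the symmetry of the dot product, and then interchange the summation labels $(x,y)\leftrightarrow(z,t)$; the result is precisely $\sum_{xy,zt}(w'_{xy})^\conj\,U_{xz}\,w_{zt}\,(\mu_{xy}\cdot\mu_{zt})\,M_{xz}(T) = \tilde a_T(v,u)$.

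Both computations are routine, and I do not anticipate a genuine obstacle. The only point demanding care is the index bookkeeping in the Hermitian step: one must correctly track which transport factor gets conjugated and confirm that, after applying (\ref{eq:ubar}) and relabeling, the conjugated and unconjugated roles of $w$ and $w'$ match the definition of $\tilde a_T(v,u)$ rather than reproducing $\tilde a_T(u,v)$.
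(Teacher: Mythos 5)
Your proposal is correct and matches the paper's intent exactly: the paper dismisses the proof as ``Trivial,'' and the computations you supply --- the covariance $w_{xy}\mapsto \exp(i\alpha_x)w_{xy}$ giving the same phase cancellation as in Proposition \ref{prop:herminv}, and the conjugation/relabeling argument via (\ref{eq:ubar}) and the symmetry of $M_{xz}(T)$ and $\mu_{xy}\cdot\mu_{zt}$ for hermiticity --- are precisely the routine verifications the authors had in mind. No gaps.
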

\begin{proof}
Trivial.
\end{proof}

Next we examine the consistancy of the discrete stiffness matrix. The key estimate is the following one:
\begin{proposition} \label{prop:piafff}
Let $\Pi_h$ denote interpolation onto affine one forms.
Choose $u \in X_h[\tilde A]$. We have :
\begin{equation}
\| \nabla_A u - \Pi_h \nabla_A u\|_{\rmL^2} \cleq h^2 \| u\|_{\rmH^1}.
\end{equation}

\end{proposition}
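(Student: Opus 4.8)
My plan is to rescale to a reference simplex $T$ of diameter $1$, where the pulled-back potential obeys $\|A\|_{\rmL^\infty}\cleq h$, establish on $T$ the bound
\[
\| \nabla_A u - \Pi \nabla_A u\|_{\rmL^2(T)} \cleq \|A\|_{\rmL^\infty}^2 \, \|u\|_{\rmH^1(T)},
\]
and let the two powers of $\|A\|_{\rmL^\infty}$ become the factor $h^2$ on scaling back, just as in Propositions \ref{prop:ltwopi} and \ref{prop:covhtwo}. Since the nodal values defining $\Pi$ depend linearly on the scalar data, I split
\[
\nabla_A u - \Pi \nabla_A u = (\nabla u - \Pi \nabla u) + i\,(Au - \Pi(Au)).
\]
The second term is easy: as $A$ is constant, $\Pi(Au)=A\,w$ with $w$ the affine interpolant of $u$, so the term is $A(u-w)$, and $\|A(u-w)\|_{\rmL^2(T)}\cleq \|A\|_{\rmL^\infty}\|u-w\|_{\rmL^2(T)}\cleq \|A\|_{\rmL^\infty}|u|_{\rmH^2(T)}\cleq \|A\|_{\rmL^\infty}^2\|u\|_{\rmH^1(T)}$, using a Bramble--Hilbert estimate for $w$ together with Proposition \ref{prop:htwobound} (recall $\Delta_A u=0$ on every subsimplex).

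The first term carries the whole difficulty. A crude interpolation estimate only gives $\|\nabla u - \Pi\nabla u\|_{\rmL^2(T)}\cleq |u|_{\rmH^2(T)}\cleq \|A\|_{\rmL^\infty}\|u\|_{\rmH^1(T)}$, one power of $\|A\|_{\rmL^\infty}$ short, and no further Sobolev regularity is available (see the remark after Proposition \ref{prop:htworec}). The extra power must come from a cancellation, not from smoothness: the part of $u$ that is linear in $A$ is a \emph{quadratic polynomial}, whose gradient is affine and is therefore reproduced exactly by $\Pi$.

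To exploit this, I set $c=\nabla w$ and let $Q$ be the quadratic polynomial with constant Hessian $-i(Ac^{\transp}+cA^{\transp})$, normalized to vanish at the four vertices. The algebraic heart is that this single quadratic reproduces the first-order covariant equation on every subsimplex simultaneously: contracting its Hessian with an edge tangent $\tau$ gives $-2i(A\cdot\tau)(c\cdot\tau)$, matching the one-dimensional equation on that edge, while its tangential trace on any face (or on $T$) is $-2i\,A\cdot P_{T'}c$, matching the intrinsic equation $\Delta_A(u|_{T'})=0$ at order $\|A\|_{\rmL^\infty}$. Consequently $\nabla(w+Q)$ is affine, hence $\Pi\nabla(w+Q)=\nabla(w+Q)$, and with $r=u-w-Q$ the first term reduces to $\nabla r - \Pi\nabla r$.

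It remains to show $r$ is genuinely second order. By construction $r$ vanishes at all vertices, and a direct computation using $\Delta_A u=0$ and the two trace identities above leaves $\Delta_A r = -2i\,A\cdot\nabla Q + |A|^2(w+Q)$ on each subsimplex, which is $O(\|A\|_{\rmL^\infty}^2\|u\|_{\rmH^1(T)})$ because $\nabla Q=O(\|A\|_{\rmL^\infty})$. Feeding this, together with the vanishing vertex values, into Propositions \ref{prop:honerec} and \ref{prop:bhhtwo} yields $\|r\|_{\rmH^1_\rec(T)}\cleq\|A\|_{\rmL^\infty}^2\|u\|_{\rmH^1(T)}$ and $|r|_{\rmH^2_\rec(T)}\cleq\|A\|_{\rmL^\infty}^2\|u\|_{\rmH^1(T)}$. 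The first bound controls $\|\nabla r\|_{\rmL^2(T)}$ directly; for $\|\Pi\nabla r\|_{\rmL^2(T)}$ I note that $\Pi$ reads only edge data, so the vertex values of $\nabla r$ are bounded by edge traces, controlled through the one-dimensional embedding $\rmH^2\hookrightarrow C^1$ by $|r|_{\rmH^2_\rec(T)}$. Both are $O(\|A\|_{\rmL^\infty}^2)$, which closes the first term and hence the estimate. The main obstacle is exactly the construction and all-dimensional verification of $Q$: one must produce a single quadratic whose Hessian matches the linearized covariant Laplacian on edges, faces and the cell at once, since it is this exact first-order cancellation --- and not regularity --- that upgrades the rate from $h$ to $h^2$.
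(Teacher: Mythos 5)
Your strategy is genuinely different from the paper's. The paper keeps the whole field $\nabla_A u$ together, bounds the interpolation error by $\sum_{T'\subcellp T}\|\Delta\,\nabla_A u\|_{\rmL^2(T')}$, and then kills the leading term through the commutation $\Delta_{\tilde A}\nabla_A u=\nabla_A\Delta_{\tilde A}u+\Delta_{\tilde A}(iAu)$ together with the product rule for $\Delta_{\tilde A}$ applied to the affine factor $A$; everything is then pushed back to norms on $T$ via Propositions \ref{prop:ltwobound}, \ref{prop:gradbound}, \ref{prop:htwobound} and scaled. You instead build an explicit quadratic corrector $Q$ whose gradient is affine (hence reproduced exactly by $\Pi$) and show the remainder $r=u-w-Q$ is second order in $\|A\|_{\rmL^\infty}$. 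The algebra of your corrector is right: with constant $A$, the trace of $\hess Q=-i(Ac^{\transp}+cA^{\transp})$ on any subsimplex cancels the term $2iA\cdot\nabla w$ of the covariant Laplacian there, so $\Delta_A r=O(\|A\|_{\rmL^\infty}^2)$ on every face and edge, and feeding the vanishing vertex values into Propositions \ref{prop:honerec} and \ref{prop:bhhtwo} closes the argument. This is a legitimate, arguably more transparent, way to exhibit the cancellation; the paper's route avoids constructing $Q$ at the price of a longer term-by-term bookkeeping. (One side remark: your ``crude'' estimate $\|\nabla u-\Pi\nabla u\|_{\rmL^2(T)}\cleq|u|_{\rmH^2(T)}$ is not what naive interpolation of the vector field $\nabla u$ gives --- that would require $|u|_{\rmH^3}$ --- but this only affects your motivation, not the proof.)

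There is, however, one genuine gap: you assume throughout that $A$ is constant on each tetrahedron. In the proposition $A$ is a Whitney one-form, which on a tetrahedron is affine with $\curl A=B$ generally nonzero --- and the constant-$B\neq 0$ case is precisely the one the method is built for. Two steps break without this assumption: the identity $\Pi(Au)=Aw$ in your ``easy'' term, and the claim that $-i(Ac^{\transp}+cA^{\transp})$ is a constant Hessian, so that $Q$ is a quadratic and $\nabla(w+Q)$ is affine. Moreover you conflate $A$ with the constant average $\tilde A$ that actually defines $X_h[\tilde A]$: the hypothesis is $\Delta_{\tilde A}u=0$, not $\Delta_A u=0$. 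The repair is to build $Q$ from $\tilde A$ and to carry the difference $A-\tilde A$, which is of size $\|B\|_{\rmL^\infty}$; since $B$ scales like a two-form, these extra contributions are $O(h^2)$ on the reference element and harmless --- this is exactly the $\|B\|_{\rmL^\infty}$ bookkeeping the paper's proof performs explicitly. As written, your proof covers only the zero-magnetic-field case.
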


\begin{proof}
We work first on a simplex $T$ of diameter one. We have an estimate:
\begin{equation}
\| \nabla_A u - \Pi_h \nabla_A u\|_{\rmL^2(T)}  \cleq \sum_{T' \subcellp T} \| \Delta \nabla_A u \|_{\rmL^2(T')}.
\end{equation}

Consider now a face $T'$ of $T$. We write:
\begin{equation}
\Delta \nabla_A u  = \Delta_{\tilde A} \nabla_A u  +  |\tilde A|^2 \nabla_A u - 2 i \tilde A \cdot \nabla \nabla_A u. \label{eq:estimatedelta}
\end{equation} 
We consider the three terms on the right hand side.

(i) First term:
\begin{align}
\Delta_{\tilde A} \nabla_A u & = \nabla_A \Delta_{\tilde A} u  + \Delta_{\tilde A} (i A u),\\
&= \Delta_{\tilde A} (i A u), \label{eq:laststop}
\end{align}
since $u \in X[\tilde A]$. Suppose more generally that $v$ is any affine function, and that $u$ is arbitrary. Then:
\begin{equation}
\Delta_{\tilde A} (v u) = v (\Delta_{\tilde A} u) + 2 \nabla v \cdot \nabla u + 2 i (\tilde A \cdot \nabla v) u.
\end{equation}
Letting the role of $v$ be played by $A$,  we may continue from (\ref{eq:laststop}):
\begin{align}
\| \Delta_{\tilde A} (i A u)  \|_{\rmL^2} \cleq \|B\|_{\rmL^\infty} \|\nabla u \|_{\rmL^2} + \| \tilde A\|_{\rmL^\infty} \| B \|_{\rmL^\infty} \| u\|_{\rmL^2} .
\end{align}

(ii) Second term:
\begin{equation}
\| \, |\tilde A|^2 \nabla_A u \|_{\rmL^2} \leq \| \tilde A \|_{\rmL^\infty}^2 \| \nabla u\|_{\rmL^2} + \| \tilde A \|_{\rmL^\infty}^2 \| A \|_{\rmL^\infty} \| u\|_{\rmL^2}.
\end{equation}

(iii) Third term:
\begin{equation}
\| \tilde A \cdot \nabla \nabla_A u \|_{\rmL^2} \cleq \| \tilde A\|_{\rmL^\infty}  \| \hess u \|_{\rmL^2} + \| \tilde A \|_{\rmL^\infty} \| B \|_{\rmL^\infty} \| u \|_{\rmL^2} + \| \tilde A \|_{\rmL^\infty} \| A \|_{\rmL^\infty} \| \nabla u\|_{\rmL^2}.
\end{equation}

Summing the three terms, we get, on $T'$ :
\begin{align}
\| \Delta \nabla_A u\|_{\rmL^2} \leq&  \| A\|_{\rmL^\infty}  \| \hess u \|_{\rmL^2} +(  \| A \|_{\rmL^\infty}^2  + \|B\|_{\rmL^\infty}) \| \nabla u\|_{\rmL^2} +\\
& ( \| A \|_{\rmL^\infty}^3 +\|  A\|_{\rmL^\infty} \| B \|_{\rmL^\infty})  \| u \|_{\rmL^2}.
\end{align}

Using Propositions \ref{prop:ltwobound}, \ref{prop:gradbound}, \ref{prop:htwobound}, the right hand side terms on $T'$ may be bounded by terms attached to $T$, as follows:
\begin{align}\label{eq:keyest}
\| \nabla_A u - \Pi \nabla_A u\|_{\rmL^2(T)}  \cleq & (  \| A \|_{\rmL^\infty}^2  + \|B\|_{\rmL^\infty}) \| \nabla u\|_{\rmL^2(T)} + \\
& ( \| A \|_{\rmL^\infty}^3 +\|  A\|_{\rmL^\infty} \| B \|_{\rmL^\infty})  \| u \|_{\rmL^2(T)}.
\end{align}

Finally one concludes by scaling, noting that $A$ scales like a one-form and $B$ like a two-form.

\end{proof}

We also notice the variant:
\begin{proposition}\label{prop:piafffvar}
Let $\Pi_h$ denote interpolation onto affine one forms.
Choose $u \in X_h[\tilde A]$. We have :
\begin{equation}
\| \nabla  \nabla_A u - \nabla \Pi_h \nabla_A u\|_{\rmL^2} \leq h \| u\|_{\rmH^1}.
\end{equation}
\end{proposition}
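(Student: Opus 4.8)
The plan is to follow the proof of Proposition \ref{prop:piafff} line by line, changing only the norm estimated on the left and the bookkeeping in the final scaling step. First I would fix a reference simplex $T$ of diameter one and set $\eta = \nabla_A u - \Pi_h \nabla_A u$, the affine interpolation error of the one-form $\nabla_A u$. Since $\Pi_h \nabla_A u$ is affine, $\eta$ vanishes at all vertices and satisfies $\Delta \eta = \Delta \nabla_A u$. Applying the recursive $\rmH^2$ estimate of Proposition \ref{prop:bhhtwo} componentwise to $\nabla_A u$ — legitimate because, exactly as in Proposition \ref{prop:piafff}, membership $u \in X_h[\tilde A]$ guarantees $\Delta \nabla_A u \in \rmL^2(T')$ on every subsimplex — controls the $\rmH^2_\rec$ content of $\eta$, and hence its gradient, by the second-order recursive quantity. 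In particular, on the reference simplex,
\begin{equation}
\| \nabla (\nabla_A u - \Pi_h \nabla_A u) \|_{\rmL^2(T)} \cleq \sum_{T' \subcellp T} \| \Delta \nabla_A u \|_{\rmL^2(T')},
\end{equation}
which is the $\rmH^1$-seminorm analogue of the first inequality used for Proposition \ref{prop:piafff}.

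From here the right-hand side is estimated \emph{identically} to Proposition \ref{prop:piafff}: I would expand $\Delta \nabla_A u$ on each face via \eqref{eq:estimatedelta}, bound its three pieces, and then convert the resulting $T'$-norms of $\hess u$, $\nabla u$ and $u$ into $T$-norms using Propositions \ref{prop:ltwobound}, \ref{prop:gradbound} and \ref{prop:htwobound}. This reproduces the reference-element bound \eqref{eq:keyest}, namely a right-hand side of the shape $(\|A\|_{\rmL^\infty}^2 + \|B\|_{\rmL^\infty}) \| \nabla u\|_{\rmL^2(T)} + (\|A\|_{\rmL^\infty}^3 + \|A\|_{\rmL^\infty}\|B\|_{\rmL^\infty}) \| u\|_{\rmL^2(T)}$; the only change is that the left-hand side is now the gradient of the error rather than the error itself.

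The final step is scaling, and this is precisely where the exponent $h$ (rather than $h^2$) appears. Mapping back to a physical simplex of diameter $h$, the gauge potential scales like a one-form ($\|A\|_{\rmL^\infty} \sim h$) and $B = \curl A$ like a two-form ($\|B\|_{\rmL^\infty} \sim h^2$), so the right-hand side scales exactly as in Proposition \ref{prop:piafff}. What differs is the left-hand side: taking one additional gradient of the error introduces one extra factor $h^{-1}$ relative to the pure $\rmL^2$ interpolation error, so the net power drops from $h^2$ to $h$, giving $\| \nabla \nabla_A u - \nabla \Pi_h \nabla_A u\|_{\rmL^2} \leq h \| u\|_{\rmH^1}$.

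I do not expect a genuine obstacle, since this is a routine variant of the preceding proposition. The one point deserving care is the reference-element inequality displayed above: one must verify that the affine interpolation error of $\nabla_A u$ is controlled in the $\rmH^1$-seminorm — not merely in $\rmL^2$ — by $\sum_{T'} \|\Delta \nabla_A u\|_{\rmL^2(T')}$, which is exactly where Proposition \ref{prop:bhhtwo} (the recursive $\rmH^2$ bound) enters. Everything downstream of that inequality is shared verbatim with the proof of Proposition \ref{prop:piafff}.
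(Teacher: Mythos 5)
Your proposal is correct and follows essentially the same route as the paper: the paper's own proof simply says to rerun the argument of Proposition \ref{prop:piafff} with the gradient of the interpolation error on the left-hand side of the reference-element bound and then observe that the scaling yields $h$ instead of $h^2$. Your added detail on why the Bramble--Hilbert/recursive-$\rmH^2$ step controls the $\rmH^1$-seminorm of the error is exactly the point the paper leaves implicit, and your accounting of the lost power of $h$ matches the paper's conclusion.
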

\begin{proof}
We go through the preceding proof to obtain the following variant of (\ref{eq:keyest}):
\begin{align}
\| \nabla \nabla_A u - \nabla \Pi_h \nabla_A u\|_{\rmL^2(T)}  \cleq & (  \| A \|_{\rmL^\infty}^2  + \|B\|_{\rmL^\infty}) \| \nabla u\|_{\rmL^2(T)} + \\
& ( \| A \|_{\rmL^\infty}^3 +\|  A\|_{\rmL^\infty} \| B \|_{\rmL^\infty})  \| u \|_{\rmL^2(T)}.
\end{align}
Then the scaling gives the factor $h$ this time.
\end{proof}

\begin{proposition} \label{prop:stiffterror} For $u, v$ in $X_h[\tilde A]$ we have an estimate:
\begin{equation}
|\int (\nabla_A u)^\conj \cdot \nabla_A v - \tilde a(u,v) | \cleq h^2 \|u\|_{\rmH^1} \| v\|_{\rmH^1}.
\end{equation}
\end{proposition}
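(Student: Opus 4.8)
The plan is to start from the crucial identity (\ref{eq:stiffdisc}), which for $u,v\in X_h[\tilde A]$ reads $\tilde a(u,v)=\langle\Pi\nabla_A u,\Pi\nabla_A v\rangle_U$, where $\Pi$ is the nodal interpolation onto affine one-forms. I would insert the genuine $\rmL^2$-product $\int(\Pi\nabla_A u)^\conj\cdot\Pi\nabla_A v$ of the two interpolants and split the error by the triangle inequality:
\begin{align*}
\Big|\int(\nabla_A u)^\conj\cdot\nabla_A v-\tilde a(u,v)\Big|
\leq{}&\Big|\int(\nabla_A u)^\conj\cdot\nabla_A v-\int(\Pi\nabla_A u)^\conj\cdot\Pi\nabla_A v\Big|\\
&{}+\Big|\int(\Pi\nabla_A u)^\conj\cdot\Pi\nabla_A v-\langle\Pi\nabla_A u,\Pi\nabla_A v\rangle_U\Big|.
\end{align*}
The first term is the error committed by interpolating $\nabla_A$ onto affine fields; the second is the consistency error of the covariant $\rmL^2$-product, now acting on affine one-forms.

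For the first term I would telescope, writing the difference as $\int(\nabla_A u-\Pi\nabla_A u)^\conj\cdot\nabla_A v+\int(\Pi\nabla_A u)^\conj\cdot(\nabla_A v-\Pi\nabla_A v)$ and applying Cauchy--Schwarz to each summand. Proposition \ref{prop:piafff} supplies $\|\nabla_A u-\Pi\nabla_A u\|_{\rmL^2}\cleq h^2\|u\|_{\rmH^1}$ and the analogous bound for $v$, while the boundedness of $A$ gives $\|\nabla_A v\|_{\rmL^2}\cleq\|v\|_{\rmH^1}$ and $\|\Pi\nabla_A u\|_{\rmL^2}\leq\|\nabla_A u\|_{\rmL^2}+\|\nabla_A u-\Pi\nabla_A u\|_{\rmL^2}\cleq\|u\|_{\rmH^1}$. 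Hence this term is $\cleq h^2\|u\|_{\rmH^1}\|v\|_{\rmH^1}$.

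For the second term the key observation is that, writing $w=\Pi\nabla_A u$ and $w'=\Pi\nabla_A v$ as affine vector fields with vertex values $w(x),w'(x)\in\bbC^3$, the covariant product reorganizes as $\langle w,w'\rangle_U=\sum_{x,z}w(x)^\conj\cdot w'(z)\,U_{xz}M_{xz}$, which has exactly the same algebraic shape as the scalar covariant product (\ref{eq:covscal}), with the scalar vertex values replaced by vectors and complex multiplication replaced by the Hermitian dot product. The symmetrization argument of Proposition \ref{prop:covhtwo} then applies verbatim: decomposing $w$ and $w'$ into barycentric value plus linear part, the constant-constant term becomes $O(h^2)$ because summing $U_{xz}-1$ symmetrically against $M_{xz}=M_{zx}$ cancels the imaginary part and leaves the factor $\cos A_{xz}-1\cleq h^2$; the two mixed terms are $O(h)\cdot O(h)$; and the linear-linear term is $O(h^3)$. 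This yields $\big|\int w^\conj\cdot w'-\langle w,w'\rangle_U\big|\cleq h^2\|w\|_{\rmH^1}\|w'\|_{\rmH^1}$. It remains to bound $\|\Pi\nabla_A u\|_{\rmH^1}$ by $\|u\|_{\rmH^1}$: the $\rmL^2$ part was just noted, and for the gradient Proposition \ref{prop:piafffvar} gives $\|\nabla\nabla_A u-\nabla\Pi\nabla_A u\|_{\rmL^2}\leq h\|u\|_{\rmH^1}$, while $\|\nabla\nabla_A u\|_{\rmL^2}\cleq\|u\|_{\rmH^1}$ follows from Proposition \ref{prop:htwobound} once one uses $\Delta_{\tilde A}u=0$ on every subsimplex to control $\|\hess u\|_{\rmL^2}$.

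All of the above is carried out on a single tetrahedron of diameter $h$; summing over the mesh and applying a discrete Cauchy--Schwarz inequality converts $\sum_T\|u\|_{\rmH^1(T)}\|v\|_{\rmH^1(T)}$ into $\|u\|_{\rmH^1}\|v\|_{\rmH^1}$, which gives the claim. I expect the genuine obstacle to be the second term: obtaining \emph{second} order rather than merely first order hinges on the Hermitian symmetrization that promotes the first-order factor $1-U_{xz}$ to the second-order factor $1-\cos A_{xz}$, exactly as in Proposition \ref{prop:covhtwo}, and this gain must be paired with the $\rmH^1$-stability of the affine interpolant $\Pi\nabla_A$, which is precisely where Propositions \ref{prop:piafffvar} and \ref{prop:htwobound} are needed.
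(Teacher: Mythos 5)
Your proposal follows essentially the same route as the paper's proof: split off the interpolation error via Proposition \ref{prop:piafff}, apply the vector-field version of Proposition \ref{prop:covhtwo} to compare the exact and covariant $\rmL^2$-products of the interpolants, and close the argument with the $\rmH^1$-stability of $\Pi_h\nabla_A$ obtained from Propositions \ref{prop:piafffvar} and \ref{prop:htwobound}. The decomposition, the key lemmas invoked, and the identification of the Hermitian symmetrization as the source of the second-order gain all match the paper's argument.
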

\begin{proof}
We first use Proposition \ref{prop:piafff} to get:
\begin{equation}
|\int (\nabla_A u)^\conj \cdot \nabla_A v    -  \langle \Pi_h \nabla_A u,  \Pi \nabla_A v \rangle | \cleq h^2 \|u\|_{\rmH^1} \| v\|_{\rmH^1}.
\end{equation}
Then we use Proposition \ref{prop:covhtwo} (generalised from scalar to vector fields):
\begin{equation}\label{eq:tob}
| \langle \Pi_h \nabla_A u,  \Pi_h \nabla_A v \rangle -  \langle \Pi_h \nabla_A u,  \Pi_h \nabla_A v \rangle_U |  \cleq h^2 \| \Pi_h \nabla_A u \|_{\rmH^1} \| \Pi_h \nabla_A v \|_{\rmH^1}.
\end{equation}
The rest of the proof is devoted to bounding the right hand side of this estimate.

Using Proposition \ref{prop:piafff} we get:
\begin{align}
\| \Pi_h \nabla_A u \|_{\rmL^2(T)} & \cleq \|  \nabla_A u \|_{\rmL^2(T)}  + h^2 \|u\|_{\rmH^1(T)},\\
&\cleq \| u \|_{\rmH^1(T)}. \label{eq:honefirst}
\end{align}
Using Proposition \ref{prop:piafffvar} we get:
\begin{align}
\| \nabla \Pi_h \nabla_A u \|_{\rmL^2(T)} \cleq \|  \nabla \nabla_A u \|_{\rmL^2(T)}  + h \|u\|_{\rmH^1(T)}.
\end{align}
On a simplex of diameter $1$ we have, using Proposition \ref{prop:htwobound}:
\begin{align}
\|  \nabla \nabla_A u \|_{\rmL^2(T)}  & \cleq \|  \nabla \nabla u \|_{\rmL^2(T)}  + \| A\|_{\rmL^\infty} \| \nabla u\|_{\rmL^2(T)} + \| B\|_{\rmL^\infty} \| u\|_{\rmL^2(T)} ,\\
& \cleq  \| A\|_{\rmL^\infty}  \| \nabla u\|_{\rmL^2(T)} +( \| A\|_{\rmL^\infty}^2 +  \| B\|_{\rmL^\infty})  \| u\|_{\rmL^2(T)} .
\end{align}
Since the left and the right hand side scale the same way, we deduce:
\begin{equation}
\| \nabla \Pi_h \nabla_A u \|_{\rmL^2(T)} \cleq \|u\|_{\rmH^1(T)}. \label{eq:honesec}
\end{equation}

From (\ref{eq:honefirst}) and (\ref{eq:honesec}) we may conclude:
\begin{equation}
\|  \Pi_h \nabla_A u \|_{\rmH^1(T)} \cleq \|u\|_{\rmH^1(T)}.
\end{equation}

Inserting this in (\ref{eq:tob}) completes the proof.
\end{proof}

\subsection{Conclusions}

Summing up, the situation is as follows.

Consider the eigenvalue problem: Find $u\in \rmH^1_0(S)$ and $E \in \bbR$ such that for all $v \in \rmH^1_0(S)$:
\begin{equation}\label{eq:eigcont}
a(u,v) = E \langle u, v \rangle.
\end{equation}
Since we assume that the domain $S$ is convex and that the gauge potential $A$ is smooth, elliptic regularity holds, in the sense that the solution operator for $\Delta_A$ maps $\rmL^2(S)$ to $\rmH^1_0(S) \cap \rmH^2(S)$. 

We do the Galerkin formulation on the space $X_h[\tilde A]$ attached to a mesh of width $h$. The order of convergence for discrete eigenvectors is $h$ in $\rmH^1$ norm and $h^2$ in $\rmL^2$ norm, since these are the orders of best approximation, see Remark \ref{rem:bestt}. The order of convergence of the eigenvalue is deduced to be $h^2$ (see in particular Lemma 3.1 in \cite{BabOsb89}).

Then we consider the modified formulation: Find $\tilde u \in X_h[\tilde A]$ and $\tilde E$ such that for all $\tilde v\in X_h[\tilde A]$:
\begin{equation}\label{eq:eigdisct}
\tilde a(\tilde u, \tilde v) = \tilde E \langle \tilde u, \tilde v \rangle_U,
\end{equation}
where the modified bilinear form $\tilde a$ was defined in Definition \ref{def:stiffm} whereas $\langle \cdot , \cdot \rangle_U$ was defined in Definition \ref{def:massm}. These modifications produce an error of order $h^2$ in $\rmH^1$ norm, as was shown in Propositions \ref{prop:stiffterror} and \ref{prop:covscalt}. Therefore the preceding orders of convergence for eigenvectors and eigenvalues are maintained (for the eigenvalue see Lemma 5.1 in \cite{BanOsb90}).

Finally we may interpolate the eigenvector $\tilde u \in X_h[\tilde A]$ onto $X_h[0]$ and still get the same orders of convergence, using Lemma \ref{prop:ltwopit}.

Of course we could also consider that we do the variational formulation of (\ref{eq:eigcont}) on $X_h[0]$ (rather than $X_h[\tilde A]$) and that the discretization (\ref{eq:eigdisct}), where $\tilde a$ is defined explicitely from vertex values in (\ref{eq:stiffcontt}), constitutes a variational crime on $X_h[0]$. However for elements $u,v \in X_h[0]$ the formula (\ref{eq:stiffdisc}), which is essential to our analysis, is then no longer true. In fact we expect the error of consistency in $\rmH^1$ norm to be $h$ in this interpretation, whereas it was $h^2$ in the preceding one. This would ruin the analysis, even though we have described the \emph{same} numerical method.

Finally it must be noted that the above analysis requires $A$ to be a Whitney form on each grid. There is an implicit step where, given $A$ on $S$ one first approximates it on the grid $\calT_h$, to be able to use the previous analysis. In general this step produces an error of order $h$ for smooth $A$, jeopardizing the above analysis. For details on how order $h$ can be obtained, under weaker hypotheses,  see \cite{ChrHal11SINUM}\cite{ChrHal12JMP}. But in the important case of a constant magnetic field $B$, the magnetic vectorpotential has (globally) the form:
\begin{equation}
A(x) = A_0 + (1/2) B \times x,
\end{equation}
for some constant $A_0$. Then the approximation of $A$ by Whitney forms is exact and our analysis applies. We consider this case to be sufficiently important to justify our new method.

\section{Extension to the Pauli equation \label{sec:pauli}}

A Pauli wave function is a complex valued two-component spinor $\psi$ in $\rmH^1_0(S)\otimes\bbC^2$. We write:
\begin{equation}
\psi =\left( \begin{array}{c}
\psi_0 \\
\psi_1
\end{array}\right).
\end{equation}
Let $\vec\sigma = (\sigma_1, \sigma_2, \sigma_3)$ be the hermitian and unitary Pauli matrices collected in a vector. The components are:
\begin{equation}
\sigma_1 = \left(\begin{array}{cc}
0 & 1 \\
1 & 0 
\end{array}\right),
\quad
\sigma_2 = \left(\begin{array}{cc}
0 & -i \\
i & 0 
\end{array}\right),
\quad
\sigma_3 = \left(\begin{array}{cc}
1 & 0 \\
0 & -1 
\end{array}\right).
\end{equation}
In natural units the time-dependent Pauli equation reads:
\begin{equation}
-\frac{1}{2}(\vec\sigma\cdot \nabla_A)^2\psi = i \partial_V \psi,
\end{equation}
where $\nabla_A\psi = \grad\psi + iA\psi$ is the covariant spatial gradient of $\psi$ and $\partial_V \psi = \dot \psi + iV\psi$ is the covariant time derivative. If we assume that the time-dependence is $\psi(x,t) = \psi(x)e^{-iEt}$, where $E\in \mathbb R$, then we get the Pauli eigenvalue equation
\begin{equation}
-\frac{1}{2} (\vec\sigma\cdot \nabla_A)^2\psi + V\psi = E\psi.
\end{equation}
From the identity
\begin{equation}
\sigma_i\sigma_j = \delta_{ij} \bbI + i\sum_k \varepsilon_{ijk}\sigma_k,
\end{equation}
where $\varepsilon$ is the totally antisymmetric Levi-Civita symbol with $\varepsilon_{123}=1$, we can rewrite the equation as
\begin{equation}
- (\nabla_A^2 + \vec\sigma\cdot\curl A)\psi + V\psi = E\psi.
\end{equation}

The variational formulation of the Pauli eigenvalue problem consists in finding $\psi \in (H^1_0(S)\otimes\mathbb C^2)$ and $E \in  \bbR$, $\psi\neq 0$, such that for all $\phi\in \rmH^1_0(S)\otimes \bbC^2$ : 
\begin{equation}\label{cont:pauli_evp}
a(\psi, \phi) + b(\psi,\phi) + c(\psi, \phi) = E\langle \psi, \phi\rangle,
\end{equation}
where $a(\cdot,\cdot)$, $b(\cdot,\cdot)$, and $c(\cdot, \cdot)$ are the bilinear forms given by:
\begin{equation}
\begin{split}
a(\psi, \phi) &= \langle \nabla_A\psi, \nabla_A\phi\rangle, \\
b(\psi, \phi) &= \langle V\psi, \phi\rangle, \\
c(\psi, \phi) &= -\langle (\vec\sigma\cdot\curl A)\psi,\phi\rangle.
\end{split}
\end{equation}

Equation \eqref{cont:pauli_evp} remains invariant under gauge transformations (\ref{eq:gaugetransfa}, \ref{eq:gaugetransfu}). We proceed to define gauge-invariant discretizations.

The bilinear forms $a$ and $\langle \cdot, \cdot \rangle$ are discretized as before, and the analysis carries over straighforwardly to spinors. 

The forms $b$ and $c$ are treated in analogy with the previously defined covariant scalar product.

Explicitely we define:
\begin{equation}\label{eq:tildeb}
\tilde b(\psi, \phi) = \sum_{xy} (\int \lambda_x V \lambda_y) \psi_x^\conj  \cdot U_{xy} \phi_y.
\end{equation}
If $V$ is a smooth potential on $S$ one can approximate it by piecewise linears, to second order, before plugging it in to  (\ref{eq:tildeb}), as was done in \cite{ChrHal11SINUM}.

We also define:
\begin{equation}\label{eq:tildec}
\tilde c(\psi, \phi) = - \sum_T \sum_{xy}  (\int_T \lambda_x \lambda_y) (\vec\sigma\cdot\curl A ) \psi_x^\conj  \cdot U_{xy} \phi_y.
\end{equation}
Here we rely on the fact that $\vec\sigma\cdot\curl A$ is a constant matrix on each $T$. 

Since there are no novelties in the proofs, we just state the conclusions: The above discretization technique yields second order convergence under the same hypotheses as before, which include constant magnetic fields and smooth scalar potentials. Lower order convergence rates can be obtained under weaker hypotheseses, as in \cite{ChrHal11SINUM}.

\section*{Acknowledgement}
The authors were supported by the European Research Council through the FP7-IDEAS-ERC Starting Grant scheme, project 278011 STUCCOFIELDS.

\bibliography{../Bibliography/alexandria,../Bibliography/newalexandria,../Bibliography/mybibliography}{}
\bibliographystyle{plain}

\end{document}